\theoremstyle{plain}
\newtheorem{theorem}{Theorem}[section]
\newtheorem{corollary}[theorem]{Corollary}
\newtheorem{proposition}[theorem]{Proposition}
\newtheorem{lemma}[theorem]{Lemma}
\theoremstyle{definition}
\newtheorem{definition}[theorem]{Definition}
\newtheorem{examples}[theorem]{Examples}
\newtheorem{remark}[theorem]{Remark}
\numberwithin{equation}{section}
\DeclareMathOperator{\Conv}{Conv}
\DeclareMathOperator{\Zero}{Zero}
\begin{document}

\title{Hemiring-valued pseudonormed rings}


\author{Peyman Nasehpour\orcidlink{0000-0001-6625-364X}}

\curraddr{Academic Advisor and Education Mentor\\ Education Department\\ The New York Academy of Sciences\\ New York, NY, USA}


\email{nasehpour@gmail.com}


\subjclass{06F25, 16Y60, 40A05, 40J05}
\dedicatory{}
\keywords{Hemiring-valued normed rings, Finite-dimensional normed algebras, Cauchy condensation test, Ratio test}

\begin{abstract}
In the second section, we introduce hemiring-valued pseudonormed rings and generalize Albert's result which states that every finite-dimensional algebra can be normed. Next, we introduce shrinkable hemirings and prove that dense division semirings are shrinkable. In the third section, we show the Cauchy Condensation Test holds for Cauchy complete fields. In the fourth section, we use Bernoulli's inequality to prove a version of ratio test for ring-valued normed groups.  
\end{abstract}

\maketitle

\section{Introduction}

As a generalization to monoid-valued metric spaces (check \cite{Braunfeld2017}, \cite{Conant2019} and Definition 2.10 in \cite{NasehpourParvardi2018}), we introduced magma-valued metric spaces and investigated their properties in \cite{Nasehpour2026}. The current paper continues the author's work on magma-valued metric spaces, although in a distinct context, i.e., hemiring-valued normed rings which generalize both normed rings (see Definition \ref{Hpseudonormedringsdef}) and valuation rings (cf. Examples \ref{examplesofpseudonormedrings}). Before reporting the contributions of this work, we establish some key notations and terminologies.

We recall that $(H,+,\cdot,0, \leq)$ is an ordered hemiring if the following properties are satisfied:

\begin{enumerate}
	\item $(H,+,0,\leq)$ is a commutative ordered monoid;
	\item $(H,\cdot)$ is a semigroup, and $a \leq b$ and $0 \leq c$ imply that $ac \leq bc$ and $ca \leq cb$, for all $a,b,c \in H$.
	\item The multiplication distributes over addition from both sides and $0$ is an absorbing element of $H$, i.e. $0h = h0 = 0$, for all $h\in H$.
\end{enumerate}

A hemiring $H$ is said to be commutative if its multiplication is commutative, i.e. $ab = ba$ for all $a$ and $b$ in $H$. We say $(H,+,\cdot,0, <)$ - for short, $(H,<)$ - is an ordered hemiring if $(H,+,\cdot,0, \leq)$ is an ordered hemiring and $<$ is compatible with addition and $a<b$ and $0<c$ imply that $ac < bc$ and $ca < cb$ for all $a,b,c \in H$. It is evident that if $\leq$ is a total ordering on $H$ and $(H,<)$ is an ordered hemiring, then $H$ is entire, i.e. $ab = 0$ implies that either $a=0$ or $b = 0$, for all $a,b \in H$.

Let $(H,<)$ be an ordered hemiring. We say a ring $R$ is an $H$-pseudonormed ring if there is a function $\Vert \cdot\Vert : R \rightarrow H$ satisfying the following properties:
	\begin{enumerate}
		\item $\Vert r\Vert \geq 0$, $\Vert r\Vert  = 0$ if and only if $r = 0$, for all $r \in R$;
		\item $\Vert r-s\Vert  \leq \Vert r\Vert +\Vert s\Vert $ and $\Vert rs\Vert  \leq \Vert r\Vert \Vert s\Vert $ for all $r,s \in R$.
	\end{enumerate}
	
	In such a case, the function $\Vert \cdot\Vert : R \rightarrow H$ is called an $H$-pseudonorm (see Definition \ref{Hpseudonormedringsdef}). Examples of hemiring-valued pseudonormed rings include real pseudonormed rings \cite{ArnautovGlavatskyMikhalev1996} and valuation rings explained in the classical book \cite{Artin1957} by Artin. 
	
	In \S\ref{sec:hemiringvaluedrings}, we generalize a key result of Albert \cite[Theorem 4]{Albert1947} proving that if $(H,<)$ is a totally ordered commutative hemiring and $F$ is a field equipped with an $H$-pseudonorm, then every finite-dimensional $F$-algebra admits the structure of an $H$-pseudonormed ring (see Theorem \ref{finitedimpseudonormedthm}).

We define an ordered hemiring $H$ to be shrinkable (see Definition \ref{shrinkabledef}) if the following property holds:

\begin{itemize}
	\item For the given positive elements $\alpha$ and $M$ in $H$, there are positive elements $\alpha_l$ and $\alpha_r$ such that \[M \alpha_r < \alpha \text{~and~} \alpha_l M < \alpha.\]
\end{itemize}

We say an ordered hemiring $(H,+,\cdot,<)$ is dense if its additive monoid is dense (see Definition \ref{densehemiringdef}). We recall that $(M,+,0,\leq)$ is a dense ordered unital magma if $+$ is a binary operation on $M$ with the following properties:

\begin{enumerate}
	\item $m+0 = 0+m = m$, for all $m \in M$,
	\item the partial ordering $\leq$ is compatible with the binary operation $+$ on $M$.
	
	\item For any $\epsilon > 0$, there are two elements $\alpha > 0$ and $\beta > 0$ with $\alpha + \beta < \epsilon$ (see Definition 2.1 in \cite{Nasehpour2026}).
\end{enumerate}

Examples of shrinkable hemirings include 

\begin{itemize}
	\item dense ordered division semirings (cf. Proposition \ref{densedivisionsemiringshrinkable}), 
	
	\item DeMarr division rings (cf. Definition 2.12 in \cite{Nasehpour2026} and Examples \ref{shrinkablehemiringsex}) and \item the ring: \[\mathbb Z[1/p] = \{m/p^n: m \in \mathbb Z, n \in \mathbb N\} \qquad\text{(cf.~Proposition~\ref{pnumbersshrinkable})}).\]
\end{itemize}

In Theorem \ref{multiplicationconvergentsequences}, we prove that if $(S,\leq)$ is a dense and shrinkable ring and a join-semilattice, and $R$ is an $S$-pseudonormed ring, then $\Conv(R)$ is a ring and the function \[\varphi: \Conv(R) \rightarrow R \text{~defined~by~} (x_n) \mapsto \lim_{n \to +\infty} x_n\] is a ring epimorphism with $\ker(\varphi) = \Zero(R)$, where by $\Conv(R)$, we mean the set of all convergent sequences in $R$ and, by $\Zero(R)$, the set of all sequences in $R$ convergent to $0 \in R$. In particular in the same result, we show that if $R$ is a commutative ring with identity, then the ideal $\Zero(R)$ is prime (maximal) if and only if $R$ is an integral domain (field). Recall that a ring $R$ is dense if and only if there is a positive element in $R$ smaller than its multiplicative identity $1$ (see Theorem 2.8 in \cite{Nasehpour2026}). 
	
In \S\ref{sec:infiniteseries}, we discuss infinite series in $M$-normed groups and generalize some of the classical results of mathematical analysis discussed in \cite{Ovchinnikov2021}. For example, in Theorem \ref{DenseCauchycompletethm}, we prove that if $(R,<)$ is a dense and Cauchy complete ordered ring and $(x_n)$, $(y_n)$, and $(z_n)$ are sequences in $R$ such that $n \geq N_1$ implies \[x_n \leq y_n \leq z_n,\] and $\sum_{n=1}^{+\infty} x_n$ and $\sum_{n=1}^{+\infty} z_n$ are convergent, then so is the series $\sum_{n=1}^{+\infty} y_n$. We also generalize Cauchy Condensation Test for Cauchy complete fields. In other words, we show that if $\leq$ is a total ordering on $F$ and $(F,<)$ a Cauchy complete ordered field, and $(x_n)$ is a positive and decreasing sequence in $F$, then $\sum_{i=1}^{+\infty} x_i$ is convergent in $F$ if and only if $\sum_{i=0}^{+\infty} 2^i x_{2^i}$ is convergent in $F$ (see Corollary \ref{Cauchycondensationtestcor}).

In \S\ref{sec:geometricseries}, we discuss the convergence of geometric series in dense and shrinkable commutative rings. In fact, in Theorem \ref{Geometricseries}, we prove that if $\leq$ is a total ordering on $R$ and $(R,<)$ an ordered commutative ring with $1$ such that $R$ is a dense and shrinkable ring, then for $r\neq1$, the geometric series $\sum_{n=0}^{+\infty} r^n$ is convergent in $R$ if and only if the sequence $(r^n)$ converges to zero and $1-r$ is invertible in $R$. 

In the same section, we generalize Bernoulli's inequality for ordered semirings (check Theorem \ref{Mitrinovicsemirings}) and prove that if $(S,\leq)$ is an ordered semiring and $(x_i)_{i=1}^{n}$ is a sequence of $n$ elements in $S$ such that $x_i\geq 0$ and $1+x_i \geq 0$, for each $1 \leq i \leq n$, then the following inequality holds: \[\prod_{i=1}^{n} (1+x_i) \geq 1 + \sum_{i=1}^{n} x_i.\] We use a ring version of this inequality to show that if $(F, <)$ is a totally ordered and Archimedean field and $r \in F$ with $- 1 < r < 1$, then $\lim_{n \to +\infty} r^n = 0$ and $\sum_{n=0}^{+\infty} r^n = \displaystyle \frac{1}{1-r}$. (see Proposition \ref{limitpowersequence} and Corollary \ref{GeometricseriesOvchinnikov2021}).

We finalize the paper by proving that if $\leq$ is a total ordering on $R$ and $(R,<)$ is an ordered commutative ring with $1$ such that $R$ is dense and shrinkable; also, if $1-r$ is invertible and $(r^n)$ converges to zero in $R$, and finally, if $(G,+)$ is a Cauchy complete $R$-normed abelian group and $(x_n)_{n=0}^{+\infty}$ is a sequence in $G$ such that $\Vert x_{n+1}\Vert  \leq r\Vert x_n\Vert $ for all $n$, then $\sum_{n=0}^{+\infty} x_n$ is convergent in $G$ (see Theorem \ref{ratiotestcounterpart}). Note that this is a version of the ratio test for ring-valued normed groups.

\section{$H$-pseudonormed rings}\label{sec:hemiringvaluedrings}

\begin{definition}\label{Hpseudonormedringsdef}
Let $R$ be a ring and $(H,\leq)$ an ordered hemiring. We say $R$ is an $H$-pseudonormed ring if there is a function $\Vert \cdot\Vert : R \rightarrow H$ satisfying the following properties:
\begin{enumerate}
	\item $\Vert r\Vert \geq 0$, $\Vert r\Vert  = 0$ if and only if $r = 0$, for all $r \in R$;
	\item $\Vert r-s\Vert  \leq \Vert r\Vert +\Vert s\Vert $ and $\Vert rs\Vert  \leq \Vert r\Vert \Vert s\Vert $ for all $r,s \in R$.
\end{enumerate}

In such a case, the function $\Vert \cdot\Vert : R \rightarrow H$ is called an $H$-pseudonorm. Furthermore, if we have $\Vert rs\Vert  = \Vert r\Vert \Vert s\Vert $, for all $r,s \in R$, we say $R$ is an $H$-normed ring and the function $\Vert \cdot\Vert $ is its $H$-norm.
\end{definition}

\begin{examples}\label{examplesofpseudonormedrings} 
In the following, we give a couple of examples for hemiring-valued pseudonormed rings:
	
\begin{enumerate}
	\item Pseudonormed (normed) rings explained in \cite{ArnautovGlavatskyMikhalev1996}, are real-valued pseudonormed (normed) rings in the sense of Definition \ref{Hpseudonormedringsdef}.
	
	\item Let $R$ be a ring and $(G,\cdot,1,<)$ a totally ordered group. Assume that $0 \notin G$ and set $G_0= G \cup \{0\}$ and extend multiplication and inequality for $G_0$ with the following rules:
	
	\begin{enumerate}
		\item $0 \cdot g = g \cdot 0 = 0$, for all $g\in G$;
		\item $0 < g$, for all $g \in G$.
	\end{enumerate}
     
    It is said that $R$ is a $G_0$-valuation ring \cite[\S10]{Artin1957} if there is a function $|\cdot|: R \rightarrow G\cup \{0\}$ with the following properties:
     
     \begin{itemize}
     	\item $|r|=0$ if and only if $r = 0$, for all $r\in R$.
     	\item $|rs| = |r||s|$, for all $r,s\in R$.
     	\item $|r+s| \leq \max\{|r|,|s|\}$, for all $r,s \in R$.  
     \end{itemize}
 
     Note that $(G_0, \max, \cdot)$ is a (division) semiring (see Example 4.27 in \cite{Golan1999(b)}), and evidently, $R$ is a $G_0$-normed ring.  
	
	\item Let $\leq$ be a total ordering on $R$ and $(R,<)$ an ordered ring. Set \[|x| = \max\{x,-x\}, \qquad\forall x\in R.\] It is evident that $R$ is an $R$-normed ring. Note that if we define \[d(x,y) = |x-y|,\qquad \forall x,y \in R,\] then $(R,d)$ is an $R$-metric space.
\end{enumerate}

\end{examples}

Albert in Theorem 4 in \cite{Albert1947} proves that every finite-dimensional real algebra is a normed algebra (see also Proposition 1.1.7 in \cite{CabreraPalacios2014}). Inspired by his proof, we prove the following:

\begin{theorem}\label{finitedimpseudonormedthm}
Let $\leq$ be a total ordering on $H$ and $(H,+,\cdot,0,<)$ be an ordered	commutative hemiring. Also, let $F$ be a field and an $H$-pseudonormed ring. Then, every finite-dimensional $F$-algebra admits the structure of an $H$-pseudonormed ring.	
\end{theorem}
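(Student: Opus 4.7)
The plan is to adapt Albert's original argument to the $H$-valued setting by first constructing an auxiliary "sum of coordinates" function on the $F$-algebra $A$, and then rescaling it by a constant extracted from the structure constants of $A$.

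First, I would fix an $F$-basis $e_1,\dots,e_n$ of $A$, so that every $x\in A$ has a unique expansion $x=\sum_{i=1}^{n} x_i e_i$ with $x_i\in F$, and define
\[
N(x) \;:=\; \sum_{i=1}^{n} \Vert x_i\Vert \; \in \; H.
\]
Using only the $H$-pseudonorm axioms for $F$ together with the identity $\Vert -r\Vert = \Vert r\Vert$ (which drops out of the triangle inequality applied to $r=0$), I would check that $N(x)\ge 0$, that $N(x)=0$ iff $x=0$, and that $N(x-y)\le N(x)+N(y)$.

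Next, I would introduce the structure constants $c_{ij}^{k}\in F$ defined by $e_i e_j = \sum_k c_{ij}^{k} e_k$ and set
\[
C \;:=\; \max_{1\le i,j\le n} \sum_{k=1}^{n}\Vert c_{ij}^{k}\Vert,
\]
which is well defined because $\leq$ is a total order on $H$. Expanding $xy$ in the basis, applying the $F$-pseudonorm triangle inequality and submultiplicativity coordinatewise, and using the commutativity of $H$ to regroup, I would establish the key estimate
\[
N(xy)\;\le\; C\cdot N(x)\cdot N(y).
\]

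Finally, to upgrade $N$ into a genuine submultiplicative $H$-pseudonorm, I would define
\[
\Vert x\Vert \;:=\; N(x)+C\cdot N(x).
\]
Positivity and the triangle inequality are immediate from the corresponding properties of $N$ and distributivity in $H$. For submultiplicativity, expand $\Vert x\Vert\,\Vert y\Vert$ via distributivity and the commutativity of $H$, and compare term by term with $\Vert xy\Vert = N(xy)+C\,N(xy)\le C\,N(x)N(y)+C^{2}N(x)N(y)$; the difference $\Vert x\Vert\Vert y\Vert-\Vert xy\Vert$ reduces to $N(x)N(y)+C\,N(x)N(y)\ge 0$. The main obstacle here is more notational than mathematical: since $H$ is only a hemiring and need not contain a multiplicative identity, I cannot simply write the rescaling as $(1+C)N(x)$ as in Albert's real case, and I must carry out all manipulations inside the additive/multiplicative structure of $H$ alone. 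This same formula also gracefully handles the degenerate possibility $C=0$ (zero multiplication), since then $N(xy)=0$ and $\Vert xy\Vert=0\le N(x)N(y)=\Vert x\Vert\Vert y\Vert$.
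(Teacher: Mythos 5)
Your proposal is correct and follows essentially the same route as the paper: the coordinate-sum function $N(x)=\sum_{i}\Vert x_i\Vert$, the structure-constant estimate $N(xy)\le C\,N(x)\,N(y)$, and a rescaling (using commutativity of $H$) to absorb the constant into a genuine $H$-pseudonorm. The only deviation is your choice of rescaling $N+C\cdot N$ in place of the paper's $nM\cdot N$ (with $M$ the maximum of the $\Vert c_{ij}^k\Vert$); this is a minor refinement that keeps the rescaled norm nondegenerate even when every structure constant vanishes, a degenerate case in which the paper's rescaled function would be identically zero.
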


\begin{proof}
	Let $|\cdot|: F \rightarrow H$ be the $H$-pseudonorm of $F$.
	Let $R$ be a finite-dimensional $F$-algebra and $(e_i)_{i=1}^n$ be a basis for the $F$-vector space $R$. For any $a = \sum_{i=1}^{n} a_i e_i$, where $a_i \in F$, define $\Vert \cdot \Vert : R \rightarrow H$ as follows: \[\Vert a\Vert  = \sum_{i=1}^{n} |a_i|.\] It is obvious that $\Vert \cdot\Vert $ satisfies the following properties: 
	
	\begin{itemize}
		\item $\Vert a\Vert \geq 0$, $\Vert a\Vert  = 0$ if and only if $a = 0$, for all $a \in R$;
		\item $\Vert a-b\Vert  \leq \Vert a\Vert +\Vert b\Vert $ for all $a,b \in R$.
	\end{itemize} 
On the other hand, it is evident that \[e_i e_j = \sum_{k=1}^{n} \gamma_{ijk} e_k,\] for some $ \gamma_{ijk} \in F$. Set $M = \max\{|\gamma_{ijk}|\}_{i,j,k \in \mathbb N_n}$. Now, assume that \[a = \sum_{i=1}^{n} a_i e_i \text{~and~} b= \sum_{j=1}^{n} b_j e_j,\] where $a_i$s and $b_j$s are elements of $F$. Observe that \begin{align*} \Vert ab\Vert  = \left|\left| \sum_{1 \leq i,j \leq n}a_i b_j e_i e_j \right|\right| = \left|\left| \sum_{1 \leq i,j \leq n}a_i b_j \sum_{k=1}^{n} \gamma_{ijk} e_k \right|\right| \\ = \left|\left|  \sum_{k=1}^{n} \left(\sum_{1 \leq i,j \leq n}a_i b_j \gamma_{ijk} \right) e_k \right|\right| = \sum_{k=1}^{n} \left| \sum_{1 \leq i,j \leq n}a_i b_j \gamma_{ijk} \right| \\ \leq nM \sum_{1\leq i,j \leq n} |a_i||b_j| = nM \left(\sum_{i=1}^{n} |a_i|\right) \left(\sum_{j=1}^{n} |b_j|\right).
	\end{align*}
	This shows that $\Vert ab\Vert  \leq nM \Vert a\Vert \Vert b\Vert $. Since $H$ is commutative, it follows that \[nM \Vert ab\Vert  \leq \left(nM \Vert a\Vert \right) \left(nM ||b||\right).\] Therefore, if we define a new function $\Vert \cdot\Vert ': A \rightarrow H$ by \[\Vert a\Vert ' = nM \Vert a\Vert ,\] then $\Vert \cdot\Vert '$ provides a suitable $H$-pseudonorm for $R$ and this completes the proof.
\end{proof}

\begin{corollary}
Let $\leq$ be a total ordering on $F$ and $(F,<)$ be an ordered field. Any finite dimensional $F$-algebra admits the structure of an $F$-pseudonormed ring.
\end{corollary}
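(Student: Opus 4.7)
The plan is to deduce this corollary as a direct specialization of Theorem \ref{finitedimpseudonormedthm} by taking $H = F$. To do so, I need to verify that $F$ itself plays the role of the totally ordered commutative hemiring $H$ and simultaneously carries an $H$-pseudonorm valued in itself.

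First I would observe that any ordered field $(F,<)$ is in particular a totally ordered commutative hemiring: the field axioms give a commutative ring (hence a commutative hemiring with $0$ absorbing), and the hypothesis that $\leq$ is a total ordering compatible with addition and multiplication by nonnegative elements is precisely what is required of the hemiring $(H,+,\cdot,0,<)$ in Theorem \ref{finitedimpseudonormedthm}. Next I would invoke Example \ref{examplesofpseudonormedrings}(3), which says that for any totally ordered ring $R$ the function $|x| = \max\{x,-x\}$ makes $R$ into an $R$-normed ring. Applied to $F$, this shows that $F$ is an $F$-normed ring, and hence a fortiori an $F$-pseudonormed ring in the sense of Definition \ref{Hpseudonormedringsdef}.

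With both hypotheses of Theorem \ref{finitedimpseudonormedthm} verified for the choice $H = F$, I would then simply apply that theorem: every finite-dimensional $F$-algebra $R$ admits the structure of an $F$-pseudonormed ring, which is exactly the conclusion of the corollary.

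There is essentially no obstacle here beyond recognizing the correct specialization; the only point that might merit a sentence of justification is the verification that the absolute value $|\cdot|: F \to F$ coming from the total ordering indeed satisfies the hemiring pseudonorm axioms (nondegeneracy, triangle inequality for differences, and submultiplicativity, the latter even holding with equality), which is routine from the ordered-field axioms and is already covered by Example \ref{examplesofpseudonormedrings}(3).
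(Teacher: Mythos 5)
Your proposal is correct and is exactly the specialization the paper intends: the corollary is stated without proof precisely because it follows by taking $H=F$ in Theorem \ref{finitedimpseudonormedthm}, with $F$ made into an $F$-pseudonormed ring via $|x|=\max\{x,-x\}$ as in Example \ref{examplesofpseudonormedrings}(3). Nothing is missing.
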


\begin{definition}\label{shrinkabledef}
Let $H$ be an ordered hemiring. We say $H$ is shrinkable if for the given positive elements $\alpha$ and $M$ in $H$, there are positive elements $\alpha_l$ and $\alpha_r$ such that \[M \alpha_r < \alpha \text{~and~} \alpha_l M < \alpha.\]
\end{definition}

\begin{definition}\label{densehemiringdef}
We say an ordered hemiring $(H,+,\cdot,<)$ is dense if its additive monoid is dense, i.e., for any $\alpha > 0$ in $H$, there are two elements $\beta > 0$ and $\gamma > 0$ in $H$ with $\beta + \gamma < \alpha$.  
\end{definition}

Let us recall that a semiring $S$ is a division semiring if any nonzero element of $S$ is multiplicatively invertible (see p. 52 in \cite{Golan1999(b)}).

\begin{proposition}\label{densedivisionsemiringshrinkable}
Let $D$ be an ordered division semiring. If $D$ is a dense semiring, then it is shrinkable. 
\end{proposition}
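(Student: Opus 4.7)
\medskip

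The plan is to build the shrinking witnesses $\alpha_l,\alpha_r$ by simply scaling a suitable inverse of $M$. Given positive $\alpha,M\in D$, density produces some positive $\beta<\alpha$, and then $\alpha_r:=M^{-1}\beta$ and $\alpha_l:=\beta M^{-1}$ should do the job, since associativity immediately gives $M\alpha_r=\beta<\alpha$ and $\alpha_l M=\beta<\alpha$. So the work is concentrated in (a) extracting the positive $\beta<\alpha$ and (b) checking that $\alpha_r,\alpha_l$ are actually positive.

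For step~(a), I would invoke Definition~\ref{densehemiringdef}: there exist $\beta,\gamma>0$ with $\beta+\gamma<\alpha$. Since $<$ is compatible with addition and $0<\gamma$, we have $\beta=\beta+0<\beta+\gamma<\alpha$, producing the required $\beta$ with $0<\beta<\alpha$. For step~(b), since $M\neq 0$ and $D$ is a division semiring, $M^{-1}$ exists; and since $0$ is absorbing, $M^{-1}=0$ would give $1=MM^{-1}=0$, which is impossible. To upgrade nonzero to positive, I would use the compatibility axiom $a\leq b,\;0\leq c\Rightarrow ca\leq cb$: if $M^{-1}\leq 0$ held, then multiplying on the left by $M>0$ would give $1=MM^{-1}\leq M\cdot 0=0$, and a second application (multiplying $1\leq 0$ on the right by $M\geq 0$) would force $M\leq 0$, contradicting $M>0$. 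Hence $M^{-1}>0$, so $\alpha_r=M^{-1}\beta$ and $\alpha_l=\beta M^{-1}$ are products of positive elements and are therefore positive (any vanishing would propagate back via $M$ to force $\beta=0$).

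Assembling these pieces, $M\alpha_r=(MM^{-1})\beta=\beta<\alpha$ and $\alpha_l M=\beta(M^{-1}M)=\beta<\alpha$ finish the proof. The only genuine subtlety is the short positivity argument for $M^{-1}$; everything else is a direct use of density and the division-semiring structure.
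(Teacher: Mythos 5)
Your argument is essentially identical to the paper's: density furnishes a positive $\beta<\alpha$, and the witnesses $\alpha_r=M^{-1}\beta$ and $\alpha_l=\beta M^{-1}$ are exactly those used there. The extra care you take in verifying that $M^{-1}$ (hence $\alpha_r,\alpha_l$) is positive is a point the paper dismisses with ``Clearly''; just note that your dichotomy ``$M^{-1}\leq 0$ or $M^{-1}>0$'' tacitly assumes the order is total, which the proposition as stated does not guarantee.
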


\begin{proof}
	Let $D$ be an ordered and a dense division semiring. Let a positive element $\alpha$ in $D$ be given. By Lemma 2.7 in \cite{Nasehpour2026}, there is positive element $\beta \in D$ with $\beta < \alpha$. Now, let $M$ be an arbitrary positive element in $D$. Since $D$ is a division semiring and $M$ is nonzero, it is invertible. Assume that \[M^{-1} M = M M^{-1} = 1\] and set $\alpha_r = M^{-1} \beta $ and $\alpha_l = \beta M^{-1}$. Clearly, \[M \alpha_r = \alpha_l M = \beta < \alpha.\] This completes the proof.
\end{proof}

\begin{examples}\label{shrinkablehemiringsex} In the following, we give some examples of dense division semirings which are, a fortiori, shrinkable by Proposition \ref{densedivisionsemiringshrinkable}.
\begin{enumerate}
	\item Let $(G,\cdot)$ be a totally ordered group. The semiring $(G_0, \max,\cdot)$ discussed in Examples \ref{examplesofpseudonormedrings} is shrinkable.
	
	\item By Theorem 2.13 in \cite{Nasehpour2026}, any DeMarr division ring is shrinkable. 
	
	\item Let $F$ be an ordered field (for example, $F=\mathbb Z(X)$ which is the ordered field of rational functions over $\mathbb Z$ \cite[Example 1.2]{Ovchinnikov2021}). It is clear that the non-negative elements $F^{\geq 0}$ of $F$ is a totally ordered semifield. Note that for any $\epsilon > 0$, we can find $\beta = \gamma = 2\epsilon/5$ satisfying the following property: \[\beta + \gamma = 4\epsilon/5 < \epsilon\] This shows that the monoid of non-negative elements of $F$,  denoted by $F^{\geq 0}$, is dense. This already means that $\mathbb Z(X)$ is shrinkable.
\end{enumerate}
\end{examples}

\begin{proposition}\label{pnumbersshrinkable}
	Let $p$ be a prime number and $\mathbb Z[1/p]$ the ring of all rational numbers of the form $m/p^n$, where $m \in \mathbb Z$ and $n \in \mathbb N$. Then, $\mathbb Z[1/p]$ is dense and shrinkable.
\end{proposition}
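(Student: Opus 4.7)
The plan is to verify density and shrinkability separately by producing explicit positive witnesses in $\mathbb{Z}[1/p]$ of the form $1/p^N$, exploiting the fact that the powers $p^N$ grow without bound. Note that I cannot simply invoke Proposition \ref{densedivisionsemiringshrinkable}, since $\mathbb{Z}[1/p]$ is not a division semiring: for instance, when $p = 3$, the element $2 \in \mathbb{Z}[1/p]$ has no multiplicative inverse (any inverse would be of the form $m/3^n$ with $2m = 3^n$, which is impossible since $\gcd(2, 3^n) = 1$).

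For density, given a positive $\alpha \in \mathbb{Z}[1/p]$, I would write $\alpha = m/p^n$ with $m \geq 1$ and $n \geq 0$ integers, and propose the symmetric choice $\beta = \gamma = 1/p^{n+k}$ for sufficiently large $k$. The required inequality $\beta + \gamma < \alpha$ reduces to $2 < m p^k$, which is automatic once $k \geq 2$, since $p \geq 2$ and $m \geq 1$.

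For shrinkability, given positive $\alpha$ and $M$ in $\mathbb{Z}[1/p]$, I would write $\alpha = b/p^t$ and $M = a/p^s$ with $a, b \geq 1$ integers and $s, t \geq 0$, and again take $\alpha_l = \alpha_r = 1/p^N$ for $N$ sufficiently large. The inequality $M \alpha_r < \alpha$ unfolds to $a p^t < b p^{s+N}$, which holds for $N$ large enough by the unboundedness of the sequence $(p^N)$ in $\mathbb{N}$; commutativity of $\mathbb{Z}[1/p]$ then yields $\alpha_l M < \alpha$ for free. The only point of care is bookkeeping with the exponents of $p$; there is no genuine obstacle here, since the argument essentially amounts to the Archimedean property of $\mathbb{Q}$ applied to the multiplicatively closed set of powers of $p$.
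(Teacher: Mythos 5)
Your proof is correct and follows essentially the same strategy as the paper: both arguments produce explicit witnesses of the form $1/p^N$ and use commutativity to get the left-hand shrinking inequality for free. The only difference is that for density the paper simply invokes Theorem 2.8 of \cite{Nasehpour2026} (a ring is dense iff it contains a positive element below $1$, witnessed by $1/p$), whereas you verify the defining inequality $\beta+\gamma<\alpha$ directly, which is a harmless, self-contained variant.
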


\begin{proof}
	Since $1/p$ which is smaller than 1 is an element of $R$, by Theorem 2.8 in \cite{Nasehpour2026}, the ring $R$ is dense. Now, let $\alpha = m_1/p^{n_1}$ and $M = m_2/p^{n_2}$ be given. It is obvious that for a suitable $n \in \mathbb N$, $\alpha_l = 1/p^n$ which is an element of $R$ satisfies the inequality $M \alpha_l < \alpha$. Since $R$ is commutative, $R$ is shrinkable and the proof is complete.
\end{proof}

\begin{lemma}\label{productboundedconvergent0}
Let $H$ be an ordered hemiring and $R$ an $H$-pseudonormed ring. Also, assume that $H$ is dense and shrinkable. If $(x_n)$ is a sequence in $R$ convergent to 0 and $(y_n)$ is a sequence in $R$ such that there is a positive element $M$ in $H$ with $\Vert y_n\Vert  \leq M$ for all $n \in \mathbb N$, then the sequences $(x_ny_n)$ and $(y_n x_n)$ are both convergent to $0 \in R$.
\end{lemma}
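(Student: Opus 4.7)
The plan is to handle the two sequences $(y_n x_n)$ and $(x_n y_n)$ by the same scheme, deriving the first in full and remarking that the second is symmetric with the roles of $\alpha_r$ and $\alpha_l$ swapped. The starting point is the submultiplicative axiom of the $H$-pseudonorm together with the two-sided order compatibility of $H$: since $\Vert y_n\Vert \leq M$ and $0 \leq \Vert x_n\Vert$, one has
\[
\Vert y_n x_n\Vert \leq \Vert y_n\Vert \Vert x_n\Vert \leq M \Vert x_n\Vert,
\]
and, analogously, $\Vert x_n y_n\Vert \leq \Vert x_n\Vert M$. So it suffices to show that each of $M\Vert x_n\Vert$ and $\Vert x_n\Vert M$ eventually lies below any prescribed positive $\epsilon \in H$.

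Given such an $\epsilon > 0$, I would invoke the shrinkability of $H$ applied to the positive pair $(\epsilon, M)$ to obtain positive elements $\alpha_l$ and $\alpha_r$ in $H$ with
\[
M \alpha_r < \epsilon \qquad\text{and}\qquad \alpha_l M < \epsilon.
\]
Since $(x_n)$ converges to $0$ in $R$, there exist $N_r, N_l \in \mathbb N$ such that $\Vert x_n\Vert < \alpha_r$ for all $n \geq N_r$ and $\Vert x_n\Vert < \alpha_l$ for all $n \geq N_l$. Using the strict compatibility of $<$ with left (resp.\ right) multiplication by the positive element $M$, for $n \geq N_r$ we get
\[
\Vert y_n x_n\Vert \leq M\Vert x_n\Vert < M \alpha_r < \epsilon,
\]
and for $n \geq N_l$ we get $\Vert x_n y_n\Vert \leq \Vert x_n\Vert M < \alpha_l M < \epsilon$. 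This establishes that both $(y_n x_n)$ and $(x_n y_n)$ converge to $0 \in R$.

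The main point to watch is that $H$ is not assumed commutative or totally ordered, so one cannot simply absorb $M$ by division or by taking a minimum; this is exactly what the two-sided formulation of shrinkability (producing both $\alpha_l$ and $\alpha_r$) is designed to handle, and it is the substantive ingredient of the argument. Density of $H$ is not invoked directly here; it enters only indirectly, insofar as the shrinkability hypothesis supplies whatever small positive elements the convergence argument demands. The rest is bookkeeping with the pseudonorm axioms and the order-compatibility clauses of the definition of an ordered hemiring.
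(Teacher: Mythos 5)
Your proof is correct and follows essentially the same route as the paper's: apply shrinkability to the pair $(\epsilon, M)$, use convergence of $(x_n)$ to $0$ to get $\Vert x_n\Vert$ below the resulting shrinking elements, and conclude via submultiplicativity of the pseudonorm and order-compatibility of multiplication. If anything, your left/right bookkeeping is slightly more careful than the paper's, which bounds $\Vert x_n\Vert\Vert y_n\Vert$ by $M\epsilon_r$ where the natural noncommutative estimate is $\epsilon_r M$ (so it should really invoke $\epsilon_l M<\epsilon$ there); your pairing of $\alpha_l$ with $x_ny_n$ and $\alpha_r$ with $y_nx_n$ is the right way to do it.
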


\begin{proof}
By assumption, there is a positive element $M$ in $H$ with $\Vert y_n\Vert  \leq M$. Let a positive element $\epsilon$ be given in $H$. Since $H$ is shrinkable, there are positive elements  $\epsilon_l$ and $\epsilon_r$ such that \[M \epsilon_r < \epsilon \text{~and~} \epsilon_l M < \epsilon.\] Since $(x_n)$ is convergent to 0, for $\epsilon_r > 0$, there is a natural number $N$ such that $n \geq N$ implies $\Vert x_n\Vert  < \epsilon_r$. Since $\Vert \cdot\Vert $ is an $H$-pseudonorm on $R$, we see that if $n \geq N$, we have \[\Vert x_n y_n\Vert  \leq \Vert x_n\Vert \Vert y_n\Vert  \leq M \epsilon_r < \epsilon,\] showing that $(x_ny_n)$ converges to $0\in R$. In a similar way, it is proved that $(y_nx_n)$ converges to $0\in R$ and the proof is complete.
\end{proof}

\begin{theorem}\label{multiplicationconvergentsequences}
Let $(S,\leq)$ be an ordered ring and $R$ an $S$-pseudonormed ring. Assume that $S$ is dense and shrinkable such that $(S,\leq)$ is a join-semilattice. Then, the following statements hold:

\begin{enumerate}
	\item If $(x_n)$ and $(y_n)$ are sequences in $R$ convergent to $a$ and $b$ in $R$, respectively, then $(x_ny_n)$ is convergent to $ab$.
	
	\item The set $\Conv(R)$ is a ring and the function \[\varphi: \Conv(R) \rightarrow R \text{~defined~by~} (x_n) \mapsto \lim_{n \to +\infty} x_n\] is a ring epimorphism with $\ker(\varphi) = \Zero(R)$, where by $\Zero(R)$, we mean the set of all sequences in $R$ convergent to 0.
	
	\item Let $R$ be a commutative ring with 1. The ideal $\Zero(R)$ is prime (maximal) if and only if $R$ is an integral domain (a field). 
\end{enumerate}

\end{theorem}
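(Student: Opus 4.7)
The plan is to follow the classical route from real analysis, taking care that the scalars now live in a hemiring. The key technical input is Lemma \ref{productboundedconvergent0}, which handles the product of a convergent-to-zero sequence with a norm-bounded sequence; the join-semilattice hypothesis on $(S,\leq)$ is what lets us promote ``eventually bounded'' to ``globally bounded''.

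For part (1), I would first establish the auxiliary fact that every convergent sequence $(x_n)$ in $R$ has norm-bounded image. Fix any positive $\epsilon\in S$ (available since $S$ is dense) and choose $N$ with $\Vert x_n-a\Vert<\epsilon$ for $n\geq N$. Because $\Vert\cdot\Vert$ is a pseudonorm, one checks $\Vert-t\Vert=\Vert t\Vert$ and hence $\Vert r+s\Vert\leq\Vert r\Vert+\Vert s\Vert$, so $\Vert x_n\Vert\leq\epsilon+\Vert a\Vert$ for $n\geq N$. The join-semilattice structure then allows the definition of the global bound
\[
M=\Vert x_1\Vert\vee\cdots\vee\Vert x_{N-1}\Vert\vee(\epsilon+\Vert a\Vert).
\]
With boundedness in hand, I would write
\[
x_ny_n-ab=x_n(y_n-b)+(x_n-a)b,
\]
note that $(y_n-b)$ and $(x_n-a)$ converge to $0$ and that $(x_n)$ and the constant sequence $b$ have bounded norms, and apply Lemma \ref{productboundedconvergent0} twice. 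Finally I would show that sums of null sequences are null: given $\epsilon>0$, density of $S$ provides $\alpha,\beta>0$ with $\alpha+\beta<\epsilon$, and then $\Vert u_n+v_n\Vert\leq\Vert u_n\Vert+\Vert v_n\Vert<\alpha+\beta<\epsilon$ eventually. Adding this to the previous observations gives $x_ny_n\to ab$.

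For part (2), closure of $\Conv(R)$ under addition and subtraction uses the density-based splitting argument from the previous paragraph, and closure under multiplication is exactly (1). The map $\varphi$ respects addition and multiplication by the same statements (limits commute with sums, differences, and products), so $\varphi$ is a ring homomorphism. Surjectivity is witnessed by constant sequences, and $(x_n)\in\ker\varphi$ is tautologically the condition $\lim x_n=0$, i.e.\ $(x_n)\in\Zero(R)$. For part (3), the first isomorphism theorem yields $\Conv(R)/\Zero(R)\cong R$; under the assumption that $R$ is commutative with $1$ (which passes to $\Conv(R)$ via constant sequences), $\Zero(R)$ is prime, resp.\ maximal, exactly when $R$ is an integral domain, resp.\ a field.

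The only non-routine step is the boundedness argument in (1): without the join-semilattice hypothesis there is no obvious way to amalgamate the finitely many initial norms $\Vert x_1\Vert,\dots,\Vert x_{N-1}\Vert$ with the tail bound $\epsilon+\Vert a\Vert$ into a single element of $S$. Everything else is a direct hemiring-valued transcription of the classical proof, with density of $S$ taking the role played by ``halving $\epsilon$'' and shrinkability (through Lemma \ref{productboundedconvergent0}) taking the role played by ``dividing by a bound''.
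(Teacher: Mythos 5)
Your proposal is correct and follows essentially the same route as the paper: the identity $x_ny_n-ab=x_n(y_n-b)+(x_n-a)b$, boundedness of convergent sequences via the join-semilattice, shrinkability to control the two products, and density to combine the two small terms, with parts (2) and (3) handled exactly as in the paper via constant sequences and the first isomorphism theorem. The only (cosmetic) differences are that you route both product terms through Lemma \ref{productboundedconvergent0}, which lets you avoid the paper's separate treatment of the case $b=0$ (the paper's direct estimate applies shrinkability to $\Vert b\Vert>0$ and so needs that case split), and that you re-derive the boundedness of convergent sequences rather than citing it from the companion paper.
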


\begin{proof}
(1): Let for the moment $\lim_{n \to +\infty} y_n = b \neq0$. Let a positive $\epsilon$ in $S$ be given. Since $S$ is dense, by Theorem 1.8 in \cite{Nasehpour2026}, we can find two positive elements $\beta$ and $\gamma$ such that $\beta + \gamma < \epsilon$. Since $(x_n)$ is a convergent sequence, by Proposition 4.5 in \cite{Nasehpour2026}, we can find a positive element $s_1 \in S$ such that \[\Vert x_n\Vert  \leq s_1, \qquad\forall~ n \in \mathbb N.\] Since $S$ is shrinkable, for $\beta > 0$ and $s_1 > 0$, and also for $\gamma > 0$ and $\Vert b\Vert  > 0$, we can find positive elements $M_l$ and $K_r$ in $S$ such that \[s_1K_r < \beta  \text{~and~} M_l \Vert b\Vert  < \gamma.\] Also, for $K_r > 0$, we can find a natural number $N_1$ such that $n \geq N_1$ implies $\Vert y_n - b\Vert  < K_r$. Similarly, we can find a natural number $N_2$ such that $n \geq N_2$ implies $\Vert x_n - a\Vert  < M_l$. Now, set $N = \max\{N_1,N_2\}$ and assume that $n \geq N$. Observe that \begin{align*}
\Vert x_n y_n - ab\Vert  = \Vert x_n y_n - x_n b + x_n b - ab\Vert  \\ \leq  \Vert x_n (y_n - b)\Vert  + \Vert (x_n - a)b\Vert  \\ \leq \Vert x_n\Vert \cdot\Vert y_n - b\Vert  + \Vert x_n - a\Vert \cdot\Vert b\Vert  \\ \leq s_1 K_r + M_l \Vert b\Vert  \leq \beta + \gamma < \epsilon.
\end{align*} This proves that $(x_ny_n)$ is convergent to $ab$. On the other hand, if $\lim_{n \to +\infty} y_n = 0$, then by Lemma \ref{productboundedconvergent0} $(x_ny_n)$ is convergent to $ab = 0$.

(2): In view of Theorem 4.6 in \cite{Nasehpour2026} and (1), $\varphi$ is a ring epimorphism and its kernel is $\Zero(R)$.

(3) Since by (2) we have $\Conv(R) / \Zero(R) \cong R$, the result is obviously obtained and the proof is complete.
\end{proof}

\begin{corollary}
Let $\leq$ be a total ordering on $R$ and $(R,<)$ be an ordered dense commutative ring with identity. If $R$ is shrinkable, then $\Conv(R)$ is a ring and $\Zero(R)$ is a prime ideal with $\Conv(R)/\Zero(R) \cong R$.
\end{corollary}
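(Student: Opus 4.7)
The plan is to specialize Theorem~\ref{multiplicationconvergentsequences} to the case $S = R$. For this I need to verify four hypotheses: that $R$ admits the structure of an $R$-pseudonormed ring, that $R$ is dense, that $R$ is shrinkable, and that $(R,\leq)$ is a join-semilattice. Density and shrinkability are assumed outright, so only the first and fourth items require comment.

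First, I would realize $R$ as an $R$-pseudonormed ring via the construction in Examples~\ref{examplesofpseudonormedrings}(3): define $|x| = \max\{x,-x\}$ for every $x \in R$, which is well-defined because $\leq$ is total, and by that example $|\cdot|$ is even an $R$-norm on $R$. Second, the totality of $\leq$ also makes $(R,\leq)$ a join-semilattice, since $\max\{a,b\}$ is simply the larger of the two elements and hence exists for every pair $a,b \in R$. With these observations in place, parts~(1) and~(2) of Theorem~\ref{multiplicationconvergentsequences} immediately yield that $\Conv(R)$ is a ring, that the evaluation map $\varphi:\Conv(R) \to R$, $(x_n) \mapsto \lim_{n \to +\infty} x_n$, is a ring epimorphism, and that $\ker(\varphi) = \Zero(R)$. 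The first isomorphism theorem then gives $\Conv(R)/\Zero(R) \cong R$.

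It remains to argue that $\Zero(R)$ is a prime ideal. For this I would invoke the observation already recorded in the introduction: any ordered hemiring whose order is total is automatically entire. Applied to the commutative ordered ring $(R,<)$, this says that $R$ is a commutative integral domain. Part~(3) of Theorem~\ref{multiplicationconvergentsequences} then delivers the primality of $\Zero(R)$.

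There is no substantive obstacle here; the corollary is genuinely a specialization. The only mild subtlety worth flagging in the write-up is recognizing that the totality of the order does double duty, providing both the join-semilattice structure needed to invoke the theorem and the entirety needed to conclude that $R$ is an integral domain.
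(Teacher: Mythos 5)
Your proposal is correct and is exactly the intended argument: the corollary is the specialization of Theorem~\ref{multiplicationconvergentsequences} to $S=R$ with the norm $|x|=\max\{x,-x\}$ from Examples~\ref{examplesofpseudonormedrings}(3), with totality of $\leq$ supplying both the join-semilattice hypothesis and (via entirety of totally ordered rings) the integral-domain property needed for part~(3). The paper leaves the proof to the reader, and your write-up fills it in along the same lines.
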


\section{Infinite series in $M$-normed groups}\label{sec:infiniteseries}

Let us recall that a group $(G,+)$ is an $M$-normed group (cf. Definition 4.1 in \cite{Nasehpour2026}) if $M$ is a dense ordered unital magma and there is a function $\Vert \cdot \Vert: G \rightarrow M$ with the following properties:

\begin{enumerate}
	\item $\Vert g \Vert \geq 0$, and $\Vert g \Vert = 0$ if and only if $g = 0$, for all $g\in G$,
	\item $\Vert g-h \Vert \leq \Vert g \Vert + \Vert h \Vert$, for all $g,h \in G$.
\end{enumerate}

\begin{proposition}\label{convergencesumofseries}
	Let $M$ be a dense ordered unital magma and $G$ an $M$-normed abelian group. Then, the following statements hold:
	
	\begin{enumerate}
		\item If $\sum^{+\infty}_{n=1} x_n$ is convergent, then $(x_n)$ converges to zero.
		\item If $\sum^{+\infty}_{n=1} x_n$ is convergent, then for any positive $\epsilon$ in $M$, there is a natural number $N$ such that $n \geq m \geq N$ implies $\Vert \sum_{i=m+1}^{n} x_i\Vert < \epsilon$.  
		\item If $\sum^{+\infty}_{n=1} x_n$ and $\sum^{+\infty}_{n=1} y_n$ are convergent series in $G$, then \[\sum^{+\infty}_{n=1} (x_n+y_n) = \sum^{+\infty}_{n=1} x_n + \sum^{+\infty}_{n=1} y_n.\]
	\end{enumerate}
\end{proposition}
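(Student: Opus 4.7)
The plan is to deduce all three items from the usual linearity properties of convergent sequences in an $M$-normed abelian group, using the density of $M$ (any $\epsilon > 0$ splits as $\alpha + \beta < \epsilon$ with $\alpha,\beta > 0$) in place of the familiar ``$\epsilon/2$'' argument. A convenient preliminary observation — immediate from the triangle inequality $\Vert g-h\Vert \leq \Vert g\Vert + \Vert h\Vert$ applied with $g = 0$, and then again with $h$ replaced by $-h$ — is $\Vert -h\Vert = \Vert h\Vert$. This lets us convert ``sums inside the norm'' into ``differences inside the norm'', which is needed because the defining triangle inequality is in subtractive form.

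For (1) and (2), write $S_n = \sum_{i=1}^{n} x_i$ and let $L$ be its limit. Both parts rest on the single estimate
\[
\Bigl\Vert \sum_{i=m+1}^{n} x_i \Bigr\Vert \;=\; \Vert S_n - S_m\Vert \;=\; \Vert (S_n - L) - (S_m - L)\Vert \;\leq\; \Vert S_n - L\Vert + \Vert S_m - L\Vert,
\]
where part (1) is the special case $m = n-1$ applied to the sequence of partial sums (which converges to $L$, and so does its shift). Given a positive $\epsilon \in M$, split it as $\alpha + \beta < \epsilon$ via density, then pick $N$ large enough that $\Vert S_k - L\Vert$ is smaller than whichever of $\alpha,\beta$ is wanted for all $k \geq N$ (using that $M$ is an ordered magma so the two conditions can be met simultaneously by taking the larger index). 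This immediately yields both $\Vert x_n\Vert < \epsilon$ for all large $n$ and the Cauchy-style bound in (2).

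For (3), set $T_n = \sum_{i=1}^{n} y_i$, $U_n = S_n + T_n = \sum_{i=1}^{n} (x_i + y_i)$, and let $L_x, L_y$ denote the respective limits. Then
\[
\Vert U_n - (L_x + L_y)\Vert \;=\; \Vert (S_n - L_x) - (-(T_n - L_y))\Vert \;\leq\; \Vert S_n - L_x\Vert + \Vert T_n - L_y\Vert,
\]
using $\Vert -h\Vert = \Vert h\Vert$ in the last step, and the same density-splitting argument shows $U_n \to L_x + L_y$. The main — and really only — technical nuisance throughout is keeping the estimates in the \emph{subtractive} form required by the triangle inequality; the identity $\Vert -h\Vert = \Vert h\Vert$ handles the one place (namely part (3)) where a genuine sum would otherwise appear inside the norm.
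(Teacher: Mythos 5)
Your proof is correct and follows essentially the same route as the paper's: part (1) via the identity $x_{n+1}=s_{n+1}-s_n$, part (2) via ``convergent implies Cauchy'' using the density splitting $\alpha+\beta<\epsilon$, and part (3) via additivity of limits. The only difference is that the paper delegates these three facts to cited results (Theorem 4.6 and Proposition 3.10 of \cite{Nasehpour2026}), whereas you reprove them inline from the triangle inequality, including the useful observation $\Vert -h\Vert=\Vert h\Vert$, which makes your version self-contained.
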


\begin{proof}
(1): Set $s_n = x_1 + x_2 + \dots + x_n$. If $(s_n)$ is convergent to $s \in G$, then it is straightforward (Proposition 3.10 in \cite{Nasehpour2026}) to see that $(s_{n+1})$ is also convergent to $s$. Since $G$ is abelian, we have $x_{n+1} = s_{n+1} - s_n$. So, we see that by Theorem 4.6 in \cite{Nasehpour2026}, we have \[\lim_{n \to +\infty} x_{n+1} = \lim_{n \to +\infty} s_{n+1} - \lim_{n \to +\infty} s_{n} = s - s = 0.\] Thus by Proposition 3.10 in \cite{Nasehpour2026}, $(x_n)$ is convergent to zero.

(2): Since $M$ is dense, by Theorem 4.6 in \cite{Nasehpour2026}, any convergent sequence is a Cauchy sequence. Therefore, for the given $\epsilon > 0$, there is a natural number $N$ such that $n \geq m \geq N$ implies \[\left|\left|\sum_{i=m+1}^{n} x_i\right|\right| = \Vert s_n - s_m \Vert  < \epsilon.\] 

(3): It evidently holds by Theorem 4.6 in \cite{Nasehpour2026}.
\end{proof}

\begin{theorem}
Let $\leq$ be a total ordering on $R$ and $(R,<)$ an ordered ring and $(x_n)$ be a strictly decreasing and positive sequence convergent to zero. Then, $s_n = \sum_{i=1}^{n} (-1)^{i+1} x_i$ is a Cauchy sequence.
\end{theorem}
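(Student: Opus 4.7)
The plan is to reduce the Cauchy condition to the standard Leibniz estimate, namely the pointwise bound $|s_n-s_m|\leq x_{m+1}$ for all $n\geq m$, and then use $x_n\to 0$ to close out. Concretely, I would fix $\epsilon>0$ in $R$, note that since $R$ is totally ordered and $(x_n)\to 0$, there is $N\in\mathbb N$ such that $n\geq N$ implies $0<x_n<\epsilon$ (here I would invoke the definition of convergence in the ordered ring $R$ together with positivity of $x_n$). It will then suffice to show that $n\geq m\geq N$ forces $|s_n-s_m|\leq x_{m+1}<\epsilon$.

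The key technical step is the following estimate: for all $n>m\geq 1$,
\[
0\;\leq\;(-1)^{m}\bigl(s_n-s_m\bigr)\;\leq\;x_{m+1}.
\]
I would prove this by the classical Abel-style pairing. Writing $s_n-s_m=\sum_{i=m+1}^{n}(-1)^{i+1}x_i$, pull out the sign $(-1)^{m}$ so that the sum becomes $x_{m+1}-x_{m+2}+x_{m+3}-\cdots\pm x_n$. The lower bound $0\leq(-1)^{m}(s_n-s_m)$ follows by grouping the terms as $(x_{m+1}-x_{m+2})+(x_{m+3}-x_{m+4})+\cdots$, each bracket being non-negative because $(x_i)$ is strictly decreasing (with an unpaired non-negative last term $x_n$ when $n-m$ is odd). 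The upper bound $(-1)^{m}(s_n-s_m)\leq x_{m+1}$ follows from the dual grouping $x_{m+1}-(x_{m+2}-x_{m+3})-(x_{m+4}-x_{m+5})-\cdots$, where each bracket is again non-negative and so each subtracted quantity is $\geq 0$ (the leftover term, if any, being $-x_n\leq 0$). Both groupings require a short parity case-split on $n-m$; I would do the two cases separately but only in outline, since the algebra is routine once the signs are tracked.

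Combining the two bounds yields $|s_n-s_m|=(-1)^{m}(s_n-s_m)\leq x_{m+1}$ (absolute value is defined in the totally ordered ring $R$ as in Examples \ref{examplesofpseudonormedrings}(3)). For $n\geq m\geq N$ we have $m+1>N$, hence $x_{m+1}<\epsilon$, so $|s_n-s_m|<\epsilon$, proving that $(s_n)$ is Cauchy.

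The only mild obstacle is bookkeeping the sign $(-1)^m$ and the parity of $n-m$ when writing the pairing; everything else is a direct consequence of strict monotonicity, positivity, and convergence of $(x_n)$ to $0$, all of which are hypotheses. No hidden use of completeness or of the shrinkable/dense machinery is needed, since Cauchyness is a purely intrinsic condition on $(s_n)$.
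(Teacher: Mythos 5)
Your proposal is correct and follows essentially the same route as the paper's proof: both establish the Leibniz estimate $|s_n-s_m|\leq x_{m+1}$ by pairing consecutive terms of the alternating tail (with a parity case split on $n-m$) and then conclude from $\lim_{n\to+\infty}x_n=0$. Your version is slightly more careful in making the lower bound $0\leq(-1)^{m}(s_n-s_m)$ explicit before identifying the absolute value, a step the paper leaves implicit when it drops the absolute value signs.
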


\begin{proof}
	Let $n > m$ and observe that \begin{align*}
		|s_n - s_m| = \left|\sum_{i=1}^{n} (-1)^{i+1} x_i - \sum_{i=1}^{m} (-1)^{i+1} x_i \right| \\ = |x_{m+1} - x_{m+2} + \dots + (-1)^{n-m+1}x_n|
	\end{align*} Since $(x_n)$ is a strictly decreasing and positive sequence, we see that \[|s_n - s_m| = \begin{cases}
		x_{m+1} - \sum_{i=m+2}^{n-1} (x_{i} - x_{i+1}) \leq x_{m+1} < x_m, & \text{if~} n-m+1 \text{~is~even,}\\
		x_{m+1} - \sum_{i=m+2}^{n-2} (x_{i} - x_{i+1}) -x_n \leq x_{m+1} < x_m, & \text{if~} n-m+1 \text{~is~odd.}
	\end{cases}\] Now, since $\lim_{n \to +\infty} x_n = 0$, $(s_n)$ is a Cauchy sequence and the proof is complete.
\end{proof}

\begin{definition}
Let $\leq$ be a total ordering and $(R,<)$ an ordered ring. We define $(R,<)$ to be Cauchy complete if each Cauchy sequence in $R$ is convergent to an element in $R$.
\end{definition}

\begin{corollary}
Let $\leq$ be a total ordering and $(R,<)$ an ordered ring. If $(R,<)$ is a Cauchy complete ring and $(x_n)$ is a strictly decreasing and positive sequence convergent to zero, then \[s_n = \sum_{i=1}^{n} (-1)^{i+1} x_i\] is a convergent sequence.
\end{corollary}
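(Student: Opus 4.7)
The plan is to observe that this corollary is essentially a one-line consequence of the immediately preceding theorem combined with the definition of Cauchy completeness. The preceding theorem established that under exactly the hypotheses assumed here on $(x_n)$ — strictly decreasing, positive, convergent to zero, in an ordered ring $(R,<)$ — the alternating partial sums $s_n = \sum_{i=1}^{n}(-1)^{i+1}x_i$ form a Cauchy sequence. The corollary simply adds the further hypothesis that $(R,<)$ is Cauchy complete, i.e., every Cauchy sequence in $R$ converges to an element of $R$.

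First I would apply the preceding theorem verbatim to the sequence $(x_n)$ to obtain that $(s_n)$ is Cauchy. Then I would invoke the Cauchy completeness of $(R,<)$ to conclude that $(s_n)$ converges to some element of $R$, which is exactly what the corollary asserts.

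There is no real obstacle here; the only substantive content has already been absorbed into the preceding theorem, and the corollary is just the packaging of that result with the completeness hypothesis. If anything, the only mild subtlety is to notice that no additional density, shrinkability, or Archimedean assumption is needed beyond what was used in the preceding theorem, since the Cauchy sequence property was already established there in complete generality for ordered rings.
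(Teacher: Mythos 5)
Your proposal is correct and matches the paper's intent exactly: the corollary is an immediate consequence of the preceding theorem (which gives that $(s_n)$ is Cauchy) together with the definition of Cauchy completeness, which is why the paper states it without a separate proof. Nothing further is needed.
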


\begin{theorem}\label{DenseCauchycompletethm} 
Let $\leq$ be a total ordering on $R$ and $(R,<)$ a dense and Cauchy complete ordered ring. Assume that $(x_n)$, $(y_n)$, and $(z_n)$ are sequences in $R$ and there is a natural number $N_1$ such that $n \geq N_1$ implies \[x_n \leq y_n \leq z_n.\] If $\sum_{n=1}^{+\infty} x_n$ and $\sum_{n=1}^{+\infty} z_n$ are convergent, then so is the series $\sum_{n=1}^{+\infty} y_n$.  	
\end{theorem}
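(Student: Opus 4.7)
The plan is to reduce to showing that the partial sums $T_n = \sum_{i=1}^{n} y_i$ form a Cauchy sequence, since $(R,<)$ is Cauchy complete. I will split these partial sums as $T_n = S_n + U_n$, where $S_n = \sum_{i=1}^{n} x_i$ and $U_n = \sum_{i=1}^{n}(y_i - x_i)$. The first summand is a partial sum of the convergent series $\sum x_n$, so by Proposition \ref{convergencesumofseries}(2) it is already Cauchy. The job then reduces to showing that $(U_n)$ is Cauchy.

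For $(U_n)$, I will compare it with the partial sums $V_n = \sum_{i=1}^{n}(z_i - x_i)$. Since $\sum x_n$ and $\sum z_n$ are both convergent, by Proposition \ref{convergencesumofseries}(3) (applied to $\sum z_n$ and $\sum(-x_n)$, with the latter convergent by the analogous argument for additive inverses via Theorem 4.6 of \cite{Nasehpour2026}) the series $\sum(z_n - x_n)$ is convergent. Hence Proposition \ref{convergencesumofseries}(2) gives that for every positive $\gamma \in R$ there is $N_2$ with $\Vert V_n - V_m\Vert < \gamma$ whenever $n \geq m \geq N_2$. For $n > m \geq \max\{N_1,N_2\}$, the hypothesis $x_i \leq y_i \leq z_i$ yields $0 \leq y_i - x_i \leq z_i - x_i$, and summing gives $0 \leq U_n - U_m \leq V_n - V_m$, so $|U_n - U_m| = U_n - U_m \leq V_n - V_m = |V_n - V_m| < \gamma$. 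Thus $(U_n)$ is Cauchy.

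Finally, to combine the two Cauchy sequences I will use density of $(R,<)$: given a positive $\epsilon$ in $R$, choose positive $\beta$, $\gamma$ with $\beta + \gamma < \epsilon$, then pick $N_3$ large enough that $n \geq m \geq N_3$ forces $\Vert S_n - S_m\Vert < \beta$ and $\Vert U_n - U_m\Vert < \gamma$. The triangle inequality in the $R$-normed abelian group $(R,+)$ then gives $\Vert T_n - T_m\Vert \leq \Vert S_n - S_m\Vert + \Vert U_n - U_m\Vert < \epsilon$, so $(T_n)$ is Cauchy and hence convergent by Cauchy completeness of $R$.

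I do not expect serious obstacles; the one place where care is needed is the comparison step, where I rely on the fact that a totally ordered ring whose positive cone is closed under addition satisfies $|a| = a$ for $a \geq 0$, so the inequality $0 \leq U_n - U_m \leq V_n - V_m$ transfers directly to an inequality between absolute values. Density of $R$ is precisely what lets me split the target $\epsilon$ between the two Cauchy estimates in the final step.
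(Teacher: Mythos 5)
Your proof is correct, but it takes a different route from the paper's. The paper sandwiches the tail sums directly: since each $\sum_{k=m+1}^{n} x_k \leq \sum_{k=m+1}^{n} y_k \leq \sum_{k=m+1}^{n} z_k$ for $m \geq \max\{N_1,N_2\}$, and the two outer quantities lie in $(-\epsilon,\epsilon)$ by the Cauchy property of the convergent series $\sum x_n$ and $\sum z_n$, the middle one does too; this gives the Cauchy property of $\big(\sum_{k=1}^n y_k\big)$ in one step, with density used only to know that convergent sequences are Cauchy. You instead decompose $y_n = x_n + (y_n - x_n)$ and run a nonnegative comparison $0 \leq y_i - x_i \leq z_i - x_i$ against the convergent series $\sum (z_n - x_n)$, then reassemble the two Cauchy pieces by splitting $\epsilon$ as $\beta + \gamma$ via density. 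Your version isolates the order-theoretic content in the cleanest possible form (a comparison of series with nonnegative terms, where $|a| = a$ makes the estimate immediate) and is closer to how the comparison test is usually stated, at the cost of two extra ingredients the paper avoids: the closure of convergent series under termwise subtraction and the $\epsilon$-splitting. Both arguments are sound; the paper's is shorter.
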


\begin{proof}
	Since $\sum_{n=1}^{+\infty} x_n$ and $\sum_{n=1}^{+\infty} z_n$ are convergent, by Theorem 3.15 in \cite{Nasehpour2026}, they are Cauchy sequences. Therefore, for the given $\epsilon > 0$, we can find a natural number $N_2$ such that $m,n \geq N_2$ implies \[\left|\sum_{k=m+1}^{n} x_k\right| < \epsilon \text{~and~} \left|\sum_{k=m+1}^{n} z_k\right| < \epsilon.\] Let $N = \max\{N_1,N_2\}$. Observe that for $n \geq N$, we have $x_n \leq y_n \leq z_n$ which implies that \[\left|\sum_{k=m+1}^{n} y_k\right| < \epsilon.\] Therefore, the sequence $\big(\sum_{k=1}^{n} y_k\big)_{n\in \mathbb N}$ is Cauchy and so, convergent since $R$ is Cauchy complete. This completes the proof.
\end{proof}

\begin{remark}
	The question arises if there is any ring other than the field of real numbers satisfying the conditions of Theorem \ref{DenseCauchycompletethm}. A famous fact in the theory of ordered fields states that ``a totally ordered field is Dedekind complete (i.e. isomorphic to the field of real numbers) if and only if it is Cauchy complete and Archimedean (see Theorem 2.5 in \cite{Ovchinnikov2021}). Therefore, any Cauchy complete field which is not Archimedean is an ordered field other than the field of real numbers. For example, let $F=\mathbb Z(X)$ be the ordered field of rational functions over $\mathbb Z$ (see Example 1.2 in \cite{Ovchinnikov2021}). Since $F$ is not Archimedean, its completion $\widetilde{F}$ is also not Archimedean. Therefore, $\widetilde{F}$ is an example of a Cauchy complete field which is not isomorphic to $\mathbb R$ (see Example 2.3 in \cite{Ovchinnikov2021}). Obviously, $\widetilde{F}$ is a dense ring.
\end{remark}

Cauchy in his famous book entitled ``Cours d'analyse'' proves that whenever each term of the series $\sum_{i=0}^{+\infty} u_i$ is non-negative and smaller than the one preceding it, that series and $\sum_{i=0}^{+\infty} 2^i u_{2^i - 1}$ are either both convergent or both divergent (see Theorem III on p. 92 in \cite{BradleySandifer2009}). This is known as the Cauchy Condensation Test in many resources. For example, see Theorem 3.27 on p. 61 and its name on p. 339 in Rudin's book \cite{Rudin1976}. There is nothing special for the number 2 in the condensed series $\sum_{i=0}^{+\infty} 2^i u_{2^i - 1}$. For a detailed discussion of Cauchy Condensation Test see pages 120--122 in \cite{Knopp1951}.

\begin{theorem}[Cauchy Condensation Test]\label{Cauchycondensationtest}
Let $\leq$ be a total ordering on $R$ and $(R,<)$ a dense and Cauchy complete ordered ring with $1$. Assume that $(x_n)$ is a positive and decreasing sequence in $R$. Then, $\sum_{i=1}^{+\infty} x_i$ is convergent in $R$ if and only if $\sum_{i=0}^{+\infty} 2^i x_{2^i}$ is convergent in $R$.
\end{theorem}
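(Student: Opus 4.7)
The plan is to reduce both convergence statements to the Cauchy condition on partial sums, since in a (possibly non-Archimedean) Cauchy complete ordered ring one cannot appeal to a monotone convergence theorem. Set
\[ s_n = \sum_{i=1}^{n} x_i \quad \text{and} \quad t_k = \sum_{i=0}^{k} 2^i x_{2^i}, \]
both of which are monotone increasing since their terms are positive in $R$. By Cauchy completeness of $R$, it suffices to show that $(s_n)$ is a Cauchy sequence if and only if $(t_k)$ is.

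The comparison rests on grouping the terms of $\sum x_i$ into dyadic blocks. For each $\ell \geq 0$ set
\[ B_\ell = x_{2^\ell+1} + x_{2^\ell+2} + \dots + x_{2^{\ell+1}}, \]
a sum of $2^\ell$ consecutive terms. Since $(x_n)$ is decreasing, $2^\ell x_{2^{\ell+1}} \leq B_\ell \leq 2^\ell x_{2^\ell}$. Summing the upper bound over $\ell = j, \dots, k$ gives (with the convention $t_{-1} := 0$)
\[ s_{2^{k+1}} - s_{2^j} = \sum_{\ell=j}^{k} B_\ell \leq \sum_{\ell=j}^{k} 2^\ell x_{2^\ell} = t_k - t_{j-1}, \]
while summing the lower bound over $\ell = j, \dots, k-1$ and doubling gives
\[ t_k - t_j = \sum_{\ell=j}^{k-1} 2^{\ell+1} x_{2^{\ell+1}} \leq 2\sum_{\ell=j}^{k-1} B_\ell = 2(s_{2^k} - s_{2^j}). \]

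Given a positive $\epsilon \in R$: if $(t_k)$ is Cauchy, I pick $J$ so that $t_k - t_{j-1} < \epsilon$ whenever $k \geq j-1 \geq J$, and set $N = 2^{J+1}$; for $n > m \geq N$, choosing the largest $j$ with $2^j \leq m$ (forcing $j \geq J+1$) and the smallest $k$ with $n \leq 2^{k+1}$ (forcing $k \geq j$) yields $s_n - s_m \leq s_{2^{k+1}} - s_{2^j} \leq t_k - t_{j-1} < \epsilon$, so $(s_n)$ is Cauchy. Conversely, density of $R$ produces a positive $\delta$ with $2\delta < \epsilon$ (pick positive $\delta_1, \delta_2$ with $\delta_1 + \delta_2 < \epsilon$ and let $\delta = \min\{\delta_1, \delta_2\}$); if $(s_n)$ is Cauchy, pick $N$ so $n_1, n_2 \geq N$ implies $s_{n_1} - s_{n_2} < \delta$, and choose $K$ with $2^K \geq N$. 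Then for $k_1 > k_2 \geq K$ we have $t_{k_1} - t_{k_2} \leq 2(s_{2^{k_1}} - s_{2^{k_2}}) < 2\delta < \epsilon$, so $(t_k)$ is Cauchy. Cauchy completeness then delivers the desired limits in both directions.

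The main obstacle is routine combinatorial bookkeeping: choosing the dyadic block indices correctly for arbitrary $m, n$ and combining the block estimates with monotonicity of $(s_n)$ and $(t_k)$ to transfer the Cauchy property between the two sequences. The only mild subtlety is the halving step in the converse direction, which needs density of $R$ to produce $\delta > 0$ with $2\delta < \epsilon$; the hypotheses that $R$ is dense and contains $1$ enter only to enable this halving, to ensure that Proposition \ref{convergencesumofseries} applies, and to guarantee that the condensation coefficients $2^i \in R$ are positive.
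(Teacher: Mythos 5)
Your proposal is correct and follows essentially the same route as the paper's proof: the same dyadic block comparison in both directions, density of $R$ to absorb the factor of $2$ when transferring the Cauchy estimate, and Cauchy completeness to convert the Cauchy property into convergence. The only differences are cosmetic (your blocks run over $[2^\ell+1,2^{\ell+1}]$ versus the paper's $[2^n,2^{n+1}-1]$ in one direction, and you halve $\epsilon$ via $\delta=\min\{\delta_1,\delta_2\}$ where the paper writes $2\sum = \sum+\sum \leq \beta+\gamma$), and your index bookkeeping is, if anything, more explicit.
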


\begin{proof}
$(\Rightarrow)$: Since $(x_n)$ is a decreasing sequence, for all $n \in \mathbb N$, we have \begin{align} 2^n x_{2^n} = 2^{n-1} (x_{2^n} + x_{2^n}) \leq 2\sum_{i=2^{n-1}+1}^{2^n} x_i. \label{Cauchycondensationformula1} \end{align} Now, suppose that $k \leq l$. Take $m$ and $n$ be positive integers such that \[m \leq 2^{k-1}+1 \text{~and~} 2^l \leq n.\] Then, by using the inequality in (\ref{Cauchycondensationformula1}), we have \begin{align} \sum_{i=k}^{l} 2^i x_{2^i} \leq 2\sum_{i=2^{k-1}+1}^{2^l} x_i \leq 2\sum_{i=m}^{n} x_i. \label{Cauchycondensationformula2} \end{align} Since $\sum_{i=1}^{+\infty} x_i$ is convergent and $R$ is dense, by Theorem 3.15 in \cite{Nasehpour2026}, $\sum_{i=1}^{+\infty} x_i$ is a Cauchy sequence. Let a positive element $\epsilon$ in $R$ be given. Since $R$ is dense, we can find positive elements $\beta$ and $\gamma$ in $R$ with $\beta + \gamma < \epsilon$. For $\beta$ $(\gamma)$, we can find a natural number $N_{\beta}$ $(N_{\gamma})$ such that \[N_{\beta} \leq m \leq n~(N_{\gamma} \leq m \leq n)\] implies $\sum_{i=m}^{n} x_i < \beta$ $(\sum_{i=m}^{n} x_i < \gamma)$. Now, set $N_1 = \max\{N_{\beta}, N_{\gamma}\}$. Then, for $N_1 \leq m \leq n$, we have \begin{align} 2\sum_{i=m}^{n} x_i = \sum_{i=m}^{n} x_i + \sum_{i=m}^{n} x_i \leq \beta + \gamma < \epsilon. \label{Cauchycondensationformula3} \end{align} Imagine $k$ is the smallest natural number with $m \leq 2^{k-1}+1$. If $k \leq l$, then $2^{k-1}+1 \leq 2^l$. Therefore, for a suitable natural number $N_2$, if $N_2 \leq k \leq l$, then in view of (\ref{Cauchycondensationformula2}) and (\ref{Cauchycondensationformula3}), we have \[\sum_{i=k}^{l}2^i x_{2^i} \leq 2\sum_{i=2^{k-1}+1}^{2^l} x_i \leq \beta + \gamma < \epsilon\] showing that  $\sum_{i=0}^{+\infty} 2^i x_{2^i}$ is a Cauchy sequence. Since $(R,<)$ is a Cauchy complete ordered ring, $\sum_{i=0}^{+\infty} 2^i x_{2^i}$ is convergent.
	
$(\Leftarrow)$: Since $(x_n)$ is a decreasing sequence, for all $n \in \mathbb N$, we have \begin{align} x_{2^n} + x_{{2^n}+1} + \dots + x_{2^{n+1} - 1} \leq 2^n x_{2^n}. \label{Cauchycondensationformula4} \end{align} Now, let $m \leq n$ be positive integers. Imagine $k$ is the greatest non-negative integer satisfying $2^k \leq m$ and $l$ is the smallest non-negative integer with $n \leq 2^{l+1} - 1$. Observe that \begin{align}
		\sum_{i=m}^{n} x_i \leq \sum_{i=2^k}^{2^{l+1}-1} x_i \leq \sum_{i=k}^{l} 2^i x_{2^i}.\label{Cauchycondensationformula5} \end{align} Since $\sum_{i=0}^{+\infty} 2^i x_{2^i}$ is convergent and $R$ is dense, by Theorem 3.15 in \cite{Nasehpour2026}, $\sum_{i=0}^{+\infty} 2^i x_{2^i}$ is a Cauchy sequence. Therefore, for the given positive $\epsilon$, there is a natural number $N_1$ such that for arbitrary $k,l$ with $N_1 \leq k \leq l$ we have $\sum_{i=k}^{l} 2^i x_{2^i} < \epsilon$. Now, in view of the inequality in (\ref{Cauchycondensationformula5}), we see that there is a natural number $N_2$ such that $N_2 \leq m \leq n$ implies \[\sum_{i=m}^{n} x_i < \epsilon\] showing that  $\sum_{i=1}^{+\infty} x_i$ is a Cauchy sequence. Since $(R,<)$ is a Cauchy complete ordered ring, $\sum_{i=1}^{+\infty} x_i$ is convergent and the proof is complete.
\end{proof}

\begin{corollary}\label{Cauchycondensationtestcor}
	Let $\leq$ be a total ordering on $F$, and $(F,<)$ a Cauchy complete totally ordered field and $(x_n)$ a positive and decreasing sequence in $F$. Then, $\sum_{i=1}^{+\infty} x_i$ is convergent in $F$ if and only if $\sum_{i=0}^{+\infty} 2^i x_{2^i}$ is convergent in $F$.
\end{corollary}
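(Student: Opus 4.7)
The plan is to deduce the corollary directly from Theorem \ref{Cauchycondensationtest} by verifying that a Cauchy complete totally ordered field $(F,<)$ satisfies the hypotheses of that theorem. The only non-trivial point is density of $F$; the ring-with-$1$ and Cauchy completeness hypotheses are already part of the statement.

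To establish density, I would use the characterization recalled in the introduction (Theorem 2.8 in \cite{Nasehpour2026}): a ring is dense as soon as it contains a positive element strictly smaller than $1$. In a totally ordered field of characteristic zero (which any ordered field automatically is, since $1>0$ forces $n\cdot 1 > 0$ for every positive integer $n$), the element $2 = 1+1$ is positive and therefore nonzero; since $F$ is a field, $2$ is invertible, and $1/2$ is positive. From $1/2 + 1/2 = 1$ and $1/2 > 0$ we conclude $0 < 1/2 < 1$, so $F$ is dense.

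Having verified that $(F,<)$ is a dense, Cauchy complete, totally ordered ring with $1$, Theorem \ref{Cauchycondensationtest} applies verbatim to the positive decreasing sequence $(x_n)$ and yields the equivalence of convergence of $\sum_{i=1}^{+\infty} x_i$ and $\sum_{i=0}^{+\infty} 2^i x_{2^i}$.

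I do not anticipate a genuine obstacle here; the corollary is essentially a specialization, and the only substantive step is the one-line verification that the multiplicative structure of a field supplies an element strictly between $0$ and $1$ (namely $1/2$), which in turn guarantees density via the cited characterization.
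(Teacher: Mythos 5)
Your proposal is correct and matches the paper's (implicit) route: the corollary is stated as an immediate specialization of Theorem \ref{Cauchycondensationtest}, and the only point to check is density, which you supply via $0<1/2<1$ together with the characterization from Theorem 2.8 of \cite{Nasehpour2026} that the paper itself recalls in the introduction. Nothing further is needed.
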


\begin{definition}
Let $(M,+,0,\leq)$ be a dense monoid and $G$ an $M$-normed abelian group.

\begin{enumerate}
	\item By definition, the $M$-normed group $G$ is Cauchy complete if any Cauchy sequence in $G$ is convergent.
	
	\item Let $x_n \in G$ for each $n \in \mathbb N$. It is said that $\sum_{i=1}^{+\infty} x_i$ is an absolutely convergent series if $\sum_{i=1}^{+\infty} \Vert x_i\Vert $ is convergent.  
\end{enumerate}
\end{definition}

\begin{theorem}\label{absolutelyconvergent}
Let $M$ be a totally ordered abelian group. Also, let $M$ be a dense group, $G$ an $M$-normed abelian group, and $x_n \in G$ for each $n \in \mathbb N$. If $G$ is Cauchy complete and $\sum_{i=1}^{+\infty} x_i$ is a absolutely convergent series then the series $\sum_{i=1}^{+\infty} x_i$ is convergent in $G$. 
\end{theorem}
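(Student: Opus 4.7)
The plan is to show that the partial sums $s_n=\sum_{i=1}^{n} x_i$ form a Cauchy sequence in $G$, so that Cauchy completeness delivers a limit. Set $t_n=\sum_{i=1}^{n}\Vert x_i\Vert$; by absolute convergence $(t_n)$ is convergent in $M$, and since $M$ is dense, the result invoked earlier in the excerpt as Theorem~3.15 of \cite{Nasehpour2026} (every convergent sequence in a dense ordered structure is Cauchy) tells us that $(t_n)$ is a Cauchy sequence in $M$.

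The crux will be the estimate $\Vert s_n-s_m\Vert\leq t_n-t_m$ for $n>m$. To obtain this I would first promote the definitional inequality $\Vert g-h\Vert\leq\Vert g\Vert+\Vert h\Vert$ to the symmetric form $\Vert g+h\Vert\leq\Vert g\Vert+\Vert h\Vert$: setting $g=0$ and using $\Vert 0\Vert=0$ yields $\Vert -h\Vert\leq\Vert h\Vert$, and the reverse inequality follows by applying the same step with $h$ replaced by $-h$, so $\Vert -h\Vert=\Vert h\Vert$. Then $\Vert g+h\Vert=\Vert g-(-h)\Vert\leq\Vert g\Vert+\Vert h\Vert$, and a straightforward induction on the number of summands gives
\[\Vert s_n-s_m\Vert=\left\Vert\sum_{i=m+1}^{n}x_i\right\Vert\leq\sum_{i=m+1}^{n}\Vert x_i\Vert=t_n-t_m.\]

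Given a positive $\epsilon\in M$, I would then choose $N$ so that $n\geq m\geq N$ implies $t_n-t_m<\epsilon$ (the difference sits in $M^{\geq 0}$ because each $\Vert x_i\Vert\geq 0$ and $M$ is totally ordered), conclude $\Vert s_n-s_m\Vert<\epsilon$, and appeal to Cauchy completeness of $G$ to extract the limit. The main obstacle is really only the bookkeeping needed to upgrade the definitional triangle inequality to its symmetric form and then iterate it inside a totally ordered but possibly non-Archimedean group $M$; once that is in place, the density of $M$ and the ``convergent implies Cauchy'' interplay already exploited throughout the excerpt close the argument mechanically.
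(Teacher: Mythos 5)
Your proposal is correct and follows essentially the same route as the paper: both arguments bound $\Vert s_n-s_m\Vert$ by the tail sum $\sum_{i=m+1}^{n}\Vert x_i\Vert$, use density of $M$ to pass from convergence of $\sum\Vert x_i\Vert$ to its Cauchy property, and then invoke Cauchy completeness of $G$. The only difference is that you spell out the upgrade of $\Vert g-h\Vert\leq\Vert g\Vert+\Vert h\Vert$ to the symmetric triangle inequality, which the paper leaves implicit.
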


\begin{proof}
Since $M$ is dense, by Theorem 3.15 in \cite{Nasehpour2026}, any convergent sequence in $G$ is a Cauchy sequence. Since $\sum_{i=1}^{+\infty} \Vert x_i\Vert $ is convergent in $M$, in view of Proposition 3.4 in \cite{Nasehpour2026}, it is a Cauchy sequence in the $M$-normed group $M$. Therefore, for the given positive $\epsilon$, we can find a natural number $N$ such that $n \geq m \geq N$ implies the following: \[\Vert x_{m+1}\Vert  + \dots + \Vert x_n\Vert  < \epsilon.\] On the other hand, \[\Vert x_{m+1} + \dots + x_n\Vert  \leq \Vert x_{m+1}\Vert  + \dots + \Vert x_n\Vert  < \epsilon.\] So, the series $\sum_{i=1}^{+\infty} x_i$ is a Cauchy sequence. Since the $M$-normed group $G$ is Cauchy complete, $\sum_{i=1}^{+\infty} x_i$ is convergent in $G$ and the proof is complete. 
\end{proof}

\section{The geometric series in dense and shrinkable commutative rings}\label{sec:geometricseries}

\begin{theorem}\label{Geometricseries}
Let $\leq$ be a total ordering on $R$ and $(R,<)$ an ordered commutative ring with 1 such that $R$ is a dense and shrinkable ring. Assume that $r \neq 1$. Then, the following statements hold:

\begin{enumerate}
	\item If the sequence $(r^n)$ is convergent, then it converges to zero.
	\item The geometric series $\sum_{n=0}^{+\infty} r^n$ is convergent in $R$ if and only if the sequence $(r^n)$ converges to zero and $1-r$ is invertible in $R$.
\end{enumerate}
\end{theorem}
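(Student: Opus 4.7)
The plan is to view $R$ as normed by its own absolute value $|x| = \max\{x,-x\}$ (Examples \ref{examplesofpseudonormedrings}(3)), so that Theorem \ref{multiplicationconvergentsequences}(1) applies: since $R$ is totally ordered (hence automatically a join-semilattice), dense, and shrinkable, the product of two convergent sequences in $R$ converges to the product of their limits. This continuity of multiplication is the workhorse for both parts.

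For (1), suppose $r^n \to L$. The shifted sequence $(r^{n+1})$ also converges to $L$, while applying the multiplicative continuity above to the constant sequence $r$ and to $(r^n)$ gives $r^{n+1} = r \cdot r^n \to rL$. Uniqueness of limits forces $rL = L$, i.e. $(1-r)L = 0$. Because the total ordering on $R$ makes $R$ entire (as noted in the introduction) and $r \neq 1$, we conclude $L = 0$.

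For (2), let $s_n = \sum_{k=0}^{n} r^k$; the telescoping identity $(1-r) s_n = 1 - r^{n+1}$ is the key algebraic input. If $\sum r^n$ converges with limit $s$, then Proposition \ref{convergencesumofseries}(1) yields $r^n \to 0$, and passing to the limit in the telescoping identity via multiplicative continuity yields $(1-r) s = 1$, exhibiting $s$ as the inverse of $1-r$. Conversely, if $r^n \to 0$ and $(1-r)^{-1}$ exists, then $s_n = (1 - r^{n+1})(1-r)^{-1}$, and multiplicative continuity applied to the constant sequence $(1-r)^{-1}$ together with $1 - r^{n+1} \to 1$ gives $s_n \to (1-r)^{-1}$.

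The real obstacle is bookkeeping rather than mathematical depth: one must verify that the standing hypotheses (dense, shrinkable, totally ordered commutative ring with $1$) precisely match the prerequisites of Theorem \ref{multiplicationconvergentsequences} applied with $S = R$, and invoke entireness of totally ordered hemirings to justify cancellation in step (1). Everything else is a direct application of continuity of ring operations and of the telescoping identity for partial sums of the geometric series.
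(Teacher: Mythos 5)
Your proposal is correct and follows essentially the same route as the paper: both parts rest on the continuity of multiplication from Theorem \ref{multiplicationconvergentsequences} (with $R$ normed over itself via $|x|=\max\{x,-x\}$), the limit identity $(1-r)L=0$ combined with entireness for part (1), and the telescoping identity $(1-r)s_n = 1-r^{n+1}$ together with Proposition \ref{convergencesumofseries} for part (2). Your explicit remark that a total order makes $R$ a join-semilattice, so the hypotheses of Theorem \ref{multiplicationconvergentsequences} are met, is a useful bookkeeping point the paper leaves implicit.
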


\begin{proof}
(1): If $r = 0$, then $(r^n)$ converges to zero. Now, assume that $r$ is nonzero and $(r^n)$ converges to $l\in R$. Set $x_n = r^n$. Evidently, $x_{n+1} = r x_n$ and $x_{n+1}$ converges to $l$ also. Observe that by Theorem \ref{multiplicationconvergentsequences}, we have \[l = \lim_{n \to +\infty} x_{n+1} = \lim_{n \to +\infty} rx_n = r \lim_{n \to +\infty} x_n = rl.\] Since $(R,<)$ is an ordered ring, $R$ is an integral domain. Now, from $l(r-1) = 0$ and $r \neq 1$, we deduce that $l = 0$.

(2): Assume that $\sum_{i=0}^{n} r^i$ is convergent to $\alpha \in R$. By Proposition \ref{convergencesumofseries}, $(r^n)$ converges to zero. Now, set $s_n = \sum_{i=0}^{n} r^i$. By Theorem \ref{multiplicationconvergentsequences}, we have \[(1-r)\alpha = (1-r)\lim_{n \to +\infty} s_n = \lim_{n \to +\infty}(1-r^{n+1}) = 1\] showing that $1-r$ is invertible in $R$. 

Conversely, since $1-r$ is invertible, we have \[1+r+\dots+r^n = \displaystyle \frac{1-r^{n+1}}{1-r}.\] Now, if $(r^n)$ converges to zero, we have $\lim_{n \to +\infty} r^{n+1} = 0$ and so, we obtain that \begin{align*}
\sum_{n=0}^{+\infty} r^n = \lim_{n \to +\infty} (1+r+\dots+r^n) = \\ \lim_{n \to +\infty} \displaystyle \frac{1-r^{n+1}}{1-r} = \displaystyle \frac{1}{1-r}\lim_{n \to +\infty} (1-r^{n+1}) = \displaystyle \frac{1}{1-r}.
\end{align*} This completes the proof.
\end{proof}

Let us recall that if each of the real numbers $x_i$s are greater than $-1$ and either all are positive or negative, then \[\prod_{i=1}^{n} (1+x_i) \geq 1 + \sum_{i=1}^{n} x_i \qquad\text{(See p. 35 in \cite{Mitrinovic1970}}).\] In the following, we prove this for (not necessarily commutative) ordered semirings:

\begin{theorem}[Generalization of Bernoulli's Inequality for Ordered Semirings] \label{Mitrinovicsemirings}
	Let $(S,\leq)$ be an ordered semiring and $(x_i)_{i=1}^{n}$ be a sequence of $n$ elements in $S$ such that $x_i\geq 0$ and $1+x_i \geq 0$, for each $1 \leq i \leq n$. Then, the following inequality holds: \[\prod_{i=1}^{n} (1+x_i) \geq 1 + \sum_{i=1}^{n} x_i.\]
\end{theorem}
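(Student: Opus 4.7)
The plan is to proceed by induction on $n$, the length of the sequence. The base case $n=1$ is immediate, since $\prod_{i=1}^{1}(1+x_i) = 1+x_1 = 1+\sum_{i=1}^{1} x_i$. For the inductive step, assuming the inequality holds for $k$, I would write $\prod_{i=1}^{k+1}(1+x_i) = \bigl(\prod_{i=1}^{k}(1+x_i)\bigr)(1+x_{k+1})$ and exploit the order compatibility axiom of $(S,\leq)$ in the following way: since $1+x_{k+1}\geq 0$ by hypothesis, right-multiplication by $1+x_{k+1}$ preserves $\leq$, so the inductive hypothesis $\prod_{i=1}^{k}(1+x_i) \geq 1+\sum_{i=1}^{k} x_i$ yields
\[
\prod_{i=1}^{k+1}(1+x_i) \;\geq\; \Bigl(1+\sum_{i=1}^{k} x_i\Bigr)(1+x_{k+1}).
\]

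Next I would expand the right-hand side using distributivity (which holds from both sides in a semiring) to get
\[
\Bigl(1+\sum_{i=1}^{k} x_i\Bigr)(1+x_{k+1}) \;=\; 1 + x_{k+1} + \sum_{i=1}^{k} x_i + \Bigl(\sum_{i=1}^{k} x_i\Bigr) x_{k+1}.
\]
The cross term $\bigl(\sum_{i=1}^{k} x_i\bigr) x_{k+1}$ is non-negative: each $x_i \geq 0$ so the sum is $\geq 0$, and then multiplying by $x_{k+1} \geq 0$ on the right (again using the order compatibility axiom) gives a product that is $\geq 0\cdot x_{k+1} = 0$. Dropping this non-negative term via the compatibility of $\leq$ with addition yields
\[
1 + x_{k+1} + \sum_{i=1}^{k} x_i + \Bigl(\sum_{i=1}^{k} x_i\Bigr) x_{k+1} \;\geq\; 1 + \sum_{i=1}^{k+1} x_i,
\]
which completes the inductive step.

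There is no real obstacle in this argument; the only subtlety is that $S$ need not be commutative, so I must be careful to right-multiply consistently in the recursion $\prod_{i=1}^{k+1} = \bigl(\prod_{i=1}^{k}\bigr)\cdot(1+x_{k+1})$ and to invoke the correct one-sided form of the compatibility axiom (namely $a\leq b$ and $0\leq c$ imply $ac\leq bc$). With that convention fixed throughout, all distributions and cancellations go through in an arbitrary ordered semiring, and no further structural hypothesis (such as the existence of additive inverses or multiplicative cancellation) is required.
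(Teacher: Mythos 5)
Your proof is correct and follows essentially the same route as the paper: induction on $n$, multiplying the inductive hypothesis by $1+x_{k+1}\geq 0$, expanding by distributivity, and discarding the non-negative cross term. The only (minor) difference is that you multiply on the right, which keeps the factors of $\prod_{i=1}^{k+1}(1+x_i)$ in their conventional order in the non-commutative setting, whereas the paper multiplies on the left and accordingly gets the cross term $\sum_{i=1}^{k} x_{k+1}x_i$ instead of your $\bigl(\sum_{i=1}^{k} x_i\bigr)x_{k+1}$.
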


\begin{proof}
	The proof is by induction on $n$. It is evident that the inequality holds for $n=1$. Now, let the inequality hold for $n = k$, i.e. \begin{align}	
		\prod_{i=1}^{k} (1+x_i) \geq 1 + \sum_{i=1}^{k} x_i. \label{Mitrinovic1} \end{align}
	
	Now, multiply the both sides of the inequality (\ref{Mitrinovic1}) by $1+x_{i+1}\geq 0$ (from the left side) and observe that \begin{align*}
		(1+x_{k+1})\left(\prod_{i=1}^{k} (1+x_i)\right) \geq (1+x_{k+1})\left(1+\sum_{i=1}^{k} x_i\right) \geq \\ (1+\sum_{i=1}^{k} x_i) + x_{k+1} + \sum_{i=1}^{k} x_{k+1} x_i  \geq 1+\sum_{i=1}^{k+1} x_i.
	\end{align*} This completes the proof.
\end{proof}

\begin{corollary}[Generalization of Bernoulli's Inequality for Ordered Rings] \label{Mitrinovicrings}
	Let $(R,\leq)$ be an ordered ring and $(x_i)_{i=1}^{n}$ be a sequence of $n$ elements in $R$ such that either $x_i\geq 0$ for each $1 \leq i \leq n$, or $x_i\leq 0$ for each $1 \leq i \leq n$. Also, suppose that $1+x_i \geq 0$, for each $1 \leq i \leq n$. Then, the following inequality holds: \[\prod_{i=1}^{n} (1+x_i) \geq 1 + \sum_{i=1}^{n} x_i.\]
\end{corollary}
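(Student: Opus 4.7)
The approach splits cleanly along the two disjoint hypotheses. In the non-negative case $x_i \geq 0$ for all $i$, the statement is immediate from Theorem \ref{Mitrinovicsemirings}: the positive cone $R^{\geq 0} = \{r \in R : r \geq 0\}$ of an ordered ring is closed under addition and multiplication and contains $0$, hence forms an ordered sub-semiring of $R$. Since each $x_i$ and each $1+x_i$ already lies in $R^{\geq 0}$, the hypotheses of Theorem \ref{Mitrinovicsemirings} are satisfied and the conclusion transfers verbatim.

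For the non-positive case $x_i \leq 0$ for all $i$, the plan is to run exactly the same induction as in Theorem \ref{Mitrinovicsemirings}. The base $n=1$ is an equality. For the inductive step, I would multiply both sides of the hypothesis $\prod_{i=1}^{k}(1+x_i) \geq 1 + \sum_{i=1}^{k} x_i$ on the left by the non-negative element $1+x_{k+1}$, which yields
\[
\prod_{i=1}^{k+1}(1+x_i) \;\geq\; 1 + \sum_{i=1}^{k+1} x_i \;+\; \sum_{i=1}^{k} x_{k+1} x_i,
\]
and then discard the final sum as non-negative.

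The only substantive difference from the semiring argument, and the single point where the ring hypothesis is genuinely used, is justifying that the residual cross term $\sum_{i=1}^{k} x_{k+1} x_i$ is non-negative. This is the main obstacle, but a minor one: in any ring one has $(-a)(-b) = ab$, so the positive cone of an ordered ring absorbs products of non-positive elements, and from $x_{k+1} \leq 0$ and $x_i \leq 0$ one concludes $x_{k+1} x_i \geq 0$ for every $i$. Alternatively, one may first substitute $y_i := -x_i$, reducing to the inequality $\prod_{i=1}^{n}(1-y_i) \geq 1 - \sum_{i=1}^{n} y_i$ under the constraints $y_i \geq 0$ and $1-y_i \geq 0$, and carry out the same induction there; either route compresses the proof to a sign check plus a verbatim invocation of the computation already written out for Theorem \ref{Mitrinovicsemirings}.
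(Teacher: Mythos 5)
Your proof is correct and takes essentially the same route as the paper's: the paper likewise isolates the single new point that $x_{i+1}\leq 0$ and $x_i\leq 0$ force $x_{i+1}x_i=(-x_{i+1})(-x_i)\geq 0$, and then reruns the induction of Theorem \ref{Mitrinovicsemirings} verbatim. (For the non-negative case your detour through the positive cone is unnecessary: an ordered ring is already an ordered semiring, so Theorem \ref{Mitrinovicsemirings} applies to $R$ directly.)
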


\begin{proof}
The only case that it must be shown is that from $x_{i+1} \leq 0$ and $x_i \leq 0$, we obtain that $0 \leq -x_{i+1}$ and $0 \leq - x_i$ which implies that \[0 \leq (- x_{i+1})(- x_i) = x_{i+1}x_i.\] The rest of the proof is similar to the proof of Theorem \ref{Mitrinovicsemirings}.
\end{proof}

\begin{corollary}\label{Bernoullisinequality}
	Let $(S,\leq)$ be an ordered semiring such that $x \geq0$ and $1+x \geq 0$. Then, for each $n \in \mathbb N$, we have \[(1+x)^n \geq 1+nx \qquad\text{(Bernoulli's inequality)}.\]
\end{corollary}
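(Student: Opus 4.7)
The plan is to apply Theorem \ref{Mitrinovicsemirings} directly to the constant sequence $x_1 = x_2 = \dots = x_n = x$. The hypotheses $x_i \geq 0$ and $1 + x_i \geq 0$ of that theorem are exactly the standing assumptions of the corollary, so nothing needs to be verified there. Under this specialization the left-hand side of the conclusion becomes $\prod_{i=1}^{n}(1+x_i) = (1+x)^n$, and the right-hand side becomes $1 + \sum_{i=1}^{n} x_i = 1 + nx$, where $nx$ is interpreted in the standard semiring sense as the $n$-fold sum $x + x + \dots + x$. The inequality $(1+x)^n \geq 1 + nx$ follows at once.

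There is essentially no obstacle beyond noting that the natural number $n$ acts on $S$ by repeated addition, which is the only sensible interpretation of $nx$ in an abstract semiring that may not contain a copy of $\mathbb{N}$ as a subsemiring. One could alternatively give a direct inductive proof mirroring the one used for Theorem \ref{Mitrinovicsemirings}, but since the corollary is a clean specialization of an already-proved statement, the cleanest presentation is simply to invoke that theorem. The proof therefore reduces to a single sentence citation.
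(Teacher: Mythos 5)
Your proposal is correct and matches the paper's (implicit) argument: the corollary is stated without proof precisely because it is the specialization of Theorem \ref{Mitrinovicsemirings} to the constant sequence $x_1=\dots=x_n=x$, exactly as you describe. Your remark that $nx$ must be read as the $n$-fold sum $x+\dots+x$ is a sensible clarification but does not constitute a departure from the paper's route.
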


Let us recall that a totally ordered field $F$ is Archimedean if any positive element $x$ and any element $y$ in $F$, there is a natural number $n$ such that $nx > y$ \cite[Definition 1.8]{Ovchinnikov2021}.

\begin{proposition}\label{limitpowersequence}
	Let $(F, <)$ be a totally ordered and Archimedean field and $r \in F$ with $- 1 < r < 1$. Then, \[\lim_{n \to +\infty} r^n = 0.\] 	
\end{proposition}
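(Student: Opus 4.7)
My plan is to combine Bernoulli's inequality (Corollary \ref{Bernoullisinequality}) with the Archimedean hypothesis, in direct analogy with the classical real-number proof. After dispensing with the trivial case $r = 0$, I would first reduce to the case $0 < r < 1$: using the totally ordered absolute value $|x| = \max\{x,-x\}$ introduced in Examples \ref{examplesofpseudonormedrings}(3), one has $|r^n| = |r|^n$ with $0 < |r| < 1$, and showing $|r|^n \to 0$ in this absolute value is equivalent to $r^n \to 0$ in the totally ordered field $F$, so we may replace $r$ by $|r|$.

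With $0 < r < 1$ secured, set $x = r^{-1} - 1$, which is a well-defined positive element of $F$ since $r^{-1} > 1$. Corollary \ref{Bernoullisinequality} applied to the non-negative element $x$ (noting $1+x = r^{-1} > 0$) gives $(1+x)^n \geq 1 + nx$ for every $n \in \mathbb N$; inverting the positive quantities, which reverses the inequality, yields $r^n = (1+x)^{-n} \leq (1+nx)^{-1} < (nx)^{-1}$ for $n \geq 1$.

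To conclude, given any positive $\epsilon \in F$, the Archimedean property applied to the positive element $x\epsilon$ and the element $1 \in F$ produces a natural number $N$ with $N(x\epsilon) > 1$, i.e.\ $(Nx)^{-1} < \epsilon$. For every $n \geq N$ we then have $nx \geq Nx > 0$, so $r^n < (nx)^{-1} \leq (Nx)^{-1} < \epsilon$, which proves $\lim_{n \to +\infty} r^n = 0$. The only mildly delicate step is the order-theoretic inversion of inequalities, which rests on the standard fact that in any totally ordered field $0 < a \leq b$ forces $b^{-1} \leq a^{-1}$; this is routine, so I do not anticipate any serious obstacle beyond careful bookkeeping between the two cases $r > 0$ and $r < 0$.
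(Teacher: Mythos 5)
Your proposal is correct and follows essentially the same route as the paper: reduce to $0 < r < 1$, set $x = r^{-1}-1$, apply Bernoulli's inequality to get $r^n \leq (1+nx)^{-1}$, and use the Archimedean property on $x\epsilon$ to choose $N$ with $N(x\epsilon) > 1$. You are merely more explicit than the paper about the reduction via $|r|$ and about the order-reversal under inversion, which the paper leaves implicit.
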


\begin{proof}
	Without loss of generality, we may assume that $0 < r < 1$. Set $x = (1/r) - 1$ and let $\epsilon > 0$ be given. By definition, there is a natural number $N$ such that $N(x\epsilon) > 1$. Now, let $n \geq N$. In view of Corollary \ref{Mitrinovicrings}, we have \[r^n = \frac{1}{(1+x)^n} \leq \frac{1}{1+nx} \leq \frac{1}{Nx} < \epsilon.\]	Hence, $\lim_{n \to +\infty} r^n = 0$, as required.
\end{proof}

In view of Theorem \ref{Geometricseries} and Proposition \ref{limitpowersequence}, we have the following:

\begin{corollary}(\cite[Exercise 6.5]{Ovchinnikov2021}) \label{GeometricseriesOvchinnikov2021}
Let $(F,<)$ be a totally ordered and Archimedean field. Assume that $r \in F$ with $-1 < r < 1$. Then, \[\sum_{n=0}^{+\infty} r^n = \displaystyle \frac{1}{1-r}.\]
\end{corollary}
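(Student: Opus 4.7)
The plan is to reduce the corollary to a direct combination of Theorem \ref{Geometricseries}(2) and Proposition \ref{limitpowersequence}, with only the verification of the hypotheses of the former requiring attention. First I would check that $F$, viewed as an ordered commutative ring with identity, is both dense and shrinkable. Density is immediate from Theorem 2.8 in \cite{Nasehpour2026}: any totally ordered field has characteristic zero, so $2 = 1+1$ is positive and invertible, giving $0 < 1/2 < 1$. Shrinkability is also immediate from the field structure, since for arbitrary positive $\alpha, M \in F$ the choice $\alpha_l = \alpha_r = (2M)^{-1}\alpha$ yields $M\alpha_r = \alpha_l M = \alpha/2 < \alpha$; alternatively one can invoke Proposition \ref{densedivisionsemiringshrinkable} applied to the division semiring $F^{\geq 0}$, whose density was already recorded in Examples \ref{shrinkablehemiringsex}(3).

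Next I would dispatch the two remaining hypotheses of the converse direction of Theorem \ref{Geometricseries}(2). The assumption $-1 < r < 1$ forces $r \neq 1$, so $1 - r$ is a nonzero element of the field $F$ and is therefore invertible. The same assumption, combined with the Archimedean hypothesis, is exactly what is needed to apply Proposition \ref{limitpowersequence}, which yields $\lim_{n\to+\infty} r^n = 0$. With density, shrinkability, invertibility of $1-r$, and $(r^n) \to 0$ all in hand, Theorem \ref{Geometricseries}(2) delivers both the convergence of $\sum_{n=0}^{+\infty} r^n$ and the identification of its value as $\frac{1}{1-r}$.

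There is no genuine obstacle in this proof: everything has been arranged in the preceding results so that the corollary falls out by citation. The only place a careless reader might pause is the observation that density and shrinkability of a totally ordered field do not depend on the Archimedean assumption and hold in full generality — a point worth stating explicitly so that the appeal to Theorem \ref{Geometricseries} is transparent.
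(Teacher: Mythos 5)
Your proof is correct and follows exactly the route the paper takes: the corollary is obtained by combining Theorem \ref{Geometricseries}(2) with Proposition \ref{limitpowersequence}, and your verification that $F$ is dense and shrinkable and that $1-r$ is invertible fills in details the paper leaves implicit. The only pedantic caveat is that the identification of the sum as $\frac{1}{1-r}$ is established in the \emph{proof} of Theorem \ref{Geometricseries}(2) rather than in its statement, a reliance the paper itself shares.
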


The following is a counterpart of a version of ``ratio test'' in mathematical analysis:

\begin{theorem}[Ratio Test]\label{ratiotestcounterpart}
Let $\leq$ be a total ordering on $R$ and $(R,<)$ an ordered commutative ring with 1 such that $R$ is dense and shrinkable. Assume that $1-r$ is invertible and $(r^n)$ converges to zero in $R$. Also, let $(G,+)$ be a Cauchy complete $R$-normed abelian group and $(x_n)_{n=0}^{+\infty}$ be a sequence in $G$ such that $\Vert x_{n+1}\Vert  \leq r\Vert x_n\Vert $ for all $n$. Then, $\sum_{n=0}^{+\infty} x_n$ is convergent in $G$.  
\end{theorem}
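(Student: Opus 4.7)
The plan is to show that the partial sums $T_n := \sum_{i=0}^{n} x_i$ form a Cauchy sequence in $G$ and then invoke Cauchy completeness. The key ingredients are a geometric bound on $\Vert x_n\Vert$ and the convergence of $\sum_{n=0}^{+\infty} r^n$ supplied by Theorem \ref{Geometricseries}(2), together with shrinkability of $R$ to turn smallness of tails of the geometric series into smallness of the relevant products.

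First I would dispose of degenerate cases. If $\Vert x_0\Vert = 0$, then $x_0 = 0$ and the condition $\Vert x_{n+1}\Vert \leq r\Vert x_n\Vert$ forces $\Vert x_n\Vert = 0$ for all $n$, so the series is trivially convergent. If $r < 0$ and $\Vert x_0\Vert > 0$, then $r\Vert x_0\Vert < 0 \leq \Vert x_1\Vert$ contradicts $\Vert x_1\Vert \leq r\Vert x_0\Vert$; hence in all remaining cases we may assume $r \geq 0$ and $\Vert x_0\Vert > 0$. In this regime a straightforward induction using the hemiring axiom that multiplication by $r \geq 0$ preserves inequalities yields
\[
\Vert x_n\Vert \leq r^n \Vert x_0\Vert, \qquad \forall n \geq 0.
\]

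Next I would fix $m < n$ and combine the triangle inequality for $\Vert\cdot\Vert$ on $G$ with the preceding bound and distributivity in $R$:
\[
\Vert T_n - T_m\Vert = \left\Vert \sum_{i=m+1}^{n} x_i \right\Vert \leq \sum_{i=m+1}^{n} \Vert x_i\Vert \leq \left( \sum_{i=m+1}^{n} r^i \right) \Vert x_0\Vert .
\]
By Theorem \ref{Geometricseries}(2), the geometric series $\sum_{n=0}^{+\infty} r^n$ converges in $R$; since $R$ is dense, the partial sums $S_n := \sum_{i=0}^{n} r^i$ therefore form a Cauchy sequence (invoking Theorem 4.6 in \cite{Nasehpour2026}, applied to the $R$-norm $|\cdot|$ on $R$ from Examples \ref{examplesofpseudonormedrings}(3)). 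Because $r \geq 0$, the tail $\sum_{i=m+1}^{n} r^i$ is itself non-negative, so it coincides with $|S_n - S_m|$.

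To convert Cauchyness of $(S_n)$ into the desired bound on $\Vert T_n - T_m\Vert$, I would use shrinkability. Given $\epsilon > 0$ in $R$, apply Definition \ref{shrinkabledef} with $M := \Vert x_0\Vert > 0$ and $\alpha := \epsilon$ to obtain $\delta > 0$ with $\delta \Vert x_0\Vert < \epsilon$. By Cauchyness of $(S_n)$ there exists $N$ such that $m,n \geq N$ implies $\sum_{i=m+1}^{n} r^i = |S_n - S_m| < \delta$. Multiplying by $\Vert x_0\Vert \geq 0$ and using compatibility of the order with multiplication gives
\[
\Vert T_n - T_m\Vert \leq \left( \sum_{i=m+1}^{n} r^i \right) \Vert x_0\Vert < \delta \Vert x_0\Vert < \epsilon.
\]
Hence $(T_n)$ is Cauchy in $G$, and Cauchy completeness of $G$ yields the desired convergence of $\sum_{n=0}^{+\infty} x_n$.

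The only mildly delicate step is the last one: one must invoke shrinkability (rather than just density) to pass from a bound $\delta$ on the geometric tail to a bound $\epsilon$ on the product with $\Vert x_0\Vert$, since the norm $\Vert x_0\Vert$ is a fixed but otherwise arbitrary positive element of $R$. Everything else is a routine adaptation of the classical ratio test.
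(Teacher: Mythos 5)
Your proof is correct and rests on the same core idea as the paper's: dominate $\Vert x_n\Vert$ by $r^n\Vert x_0\Vert$ and compare with the geometric series, whose convergence comes from Theorem \ref{Geometricseries}. The execution differs in a worthwhile way. The paper multiplies the convergent series $\sum r^n$ by the constant $\Vert x_0\Vert$, invokes the comparison argument of Theorem \ref{DenseCauchycompletethm} to conclude that $\sum \Vert x_n\Vert$ converges in $R$, and then appeals to the absolute-convergence result (Theorem \ref{absolutelyconvergent}) to pass to convergence in $G$. You instead estimate the tails $\Vert T_n - T_m\Vert$ directly and use shrinkability at the $\epsilon$-level to absorb the fixed factor $\Vert x_0\Vert$; this bypasses the intermediate claim that $\sum \Vert x_n\Vert$ converges in $R$ --- a claim which, as literally stated, would require $R$ to be Cauchy complete (a hypothesis the theorem does not impose), whereas only the Cauchy property of those partial sums is actually needed downstream, which your route supplies. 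Your version also makes explicit two points the paper calls ``evident'': that $r \geq 0$ is forced unless all $x_n = 0$, which is what legitimizes iterating $\Vert x_{n+1}\Vert \leq r\Vert x_n\Vert$ into $\Vert x_n\Vert \leq r^n\Vert x_0\Vert$, and the degenerate case $\Vert x_0\Vert = 0$, without which the shrinkability step (which needs $M > 0$) would not apply. Both routes prove the theorem; yours is more self-contained and slightly more careful, the paper's reuses its general machinery on absolute convergence.
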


\begin{proof}
By Theorem \ref{Geometricseries}, $\sum_{n=0}^{+\infty} r^n$ is convergent. This implies that the series \[\sum_{n=0}^{+\infty} (r^n\Vert x_0\Vert )\] is also convergent. On the other hand, $R$ is dense. So, by Theorem 3.15 in \cite{Nasehpour2026}, \[\sum_{n=0}^{+\infty} (r^n \Vert x_0\Vert )\] is a Cauchy sequence. It is evident that \[0 \leq \Vert x_n\Vert  \leq r^n \Vert x_0\Vert , \qquad \forall~n \in \mathbb N.\] Similar to the proof of Theorem \ref{DenseCauchycompletethm}, we can see that $\sum_{n=0}^{+\infty} \Vert x_n\Vert $ is convergent. Also, similar to the proof of Theorem \ref{absolutelyconvergent}, it is proved that $\sum_{n=0}^{+\infty} x_n$ is convergent. This completes the proof.
\end{proof}

\section*{Acknowledgments} The author is grateful to Prof. Winfried Bruns for his encouragements and wishes to thank Prof. Henk Koppelaar for introducing the classical book \cite{Knopp1951}.

\end{document}